\newtheorem{theo}{Theorem}[section]
\newtheorem{theo-app}{Theorem}[section]
\newtheorem{theorem}[theo]{Theorem} 
\newtheorem{corollary}[theo]{Corollary}
\newtheorem{proposition}[theo]{Proposition}
\newtheorem{lemma}[theo]{Lemma}
\theoremstyle{definition}
\newtheorem{example}[theo]{Example}
\newtheorem{remark}[theo]{Remark}
\newtheorem*{remark*}{Remark}
\newtheorem{definition}[theo]{Definition}
\numberwithin{equation}{section}
\renewcommand{\hat}{\widehat}
\newcommand{\sO}{\mathscr{O}}
\newcommand{\sT}{\mathscr{T}}
\newcommand{\sA}{\mathscr{A}}
\newcommand{\sB}{\mathscr{B}}
\newcommand{\I}{\mathcal{I}}
\DeclareMathOperator{\cl}{cl}
\DeclareMathOperator{\BD}{BD}
\newcommand{\R}{\mathbb{R}}
\newcommand{\C}{\mathbb{C}}
\DeclareMathOperator{\diam}{diam}
\DeclareMathOperator{\dist}{Dist}
\DeclareMathOperator{\interior}{int}
\DeclareMathOperator{\Crit}{{\rm Crit}}
\DeclareMathOperator{\Comp}{{\rm Comp}}
\DeclareMathOperator{\spanning}{{\rm spanning}}
\DeclareMathOperator{\tree}{{\rm tree}}
\DeclareMathOperator{\Const}{{\rm Const}}
\DeclareMathOperator{\ddist}{{\rm dist}}
\def\bR{\mathbb{R}}
\def\e{\varepsilon}
\DeclareMathSymbol{\varnothing}{\mathord}{AMSb}{"3F}
\renewcommand{\emptyset}{\varnothing}
\author[F. Przytycki]{Feliks Przytycki$^\dag$} \address{Institute of Mathematics, Polish Academy of Sciences, ul. \'{S}niadeckich 8, 00-956 Warszawa, Poland}
\email{feliksp@impan.pl}
\thanks{
$\dag$Partially supported by Polish NCN Grant "Chaos, fractals and conformal dynamics, III",
2014/13/B/ST1/01033}
\begin{document}

\date{\today}

\title[geometric pressure]{Geometric pressure in real and complex 1-dimensional dynamics via trees of pre-images and via spanning sets}

\begin{abstract}
We consider $f:\hat I\to \R$ being a $C^3$ (or $C^2$ with bounded distortion) real-valued multimodal map with non-flat critical points, defined on $\hat I$ being the union of closed intervals, and its restriction to the maximal forward invariant subset $K\subset I$. We assume that $f|_K$ is topologically
transitive. We call this setting the (generalized multimodal) real case. We consider also $f:\C\to \C$
a rational map on the Riemann sphere and its restriction to $K=J(f)$ being Julia set (the complex case).
We consider topological pressure $P_{\spanning}(t)$ for the potential function $\varphi_t=-t\log |f'|$ for $t>0$ and iteration of $f$ defined in a standard way using $(n,\e)$-spanning sets. Despite of $\phi_t=\infty$ at critical points of $f$, this definition makes sense (unlike the standard definition using $(n,\e)$-separated sets) and we prove that $P_{\spanning}(t)$ is equal to other pressure quantities, called for this potential
{\it geometric pressure}, in the real case under mild additional assumptions,  and in the complex case provided there is at most one critical point with
forward trajectory accumulating in $J(f)$.  $P_{\spanning}(t)$ is proved to be finite for  general rational maps, but it may occur infinite in the real case. We also prove that geometric tree pressure in the real case is the same for trees rooted at all `safe' points, in particular at all points except the set of Hausdorff dimension 0, the fact missing in \cite{PrzRiv:13}, proved in the complex case in \cite{Prz:99}.

\end{abstract}

\maketitle
\tableofcontents

\section{Introduction}

Let us start with the classical
\begin{definition}[Topological pressure via separated sets]

Let $f:X\to X$ be a continuous map of a compact metric space $(X,\rho)$ and $\phi:X\to \R$ be a real continuous function. For every positive integer  $n$ and $x\in X$ denote $S_n\phi(x)=\sum_{j=0}^{n-1}\phi(f^j(x))$.
For every integer $n\ge 0$ define the metric $\rho_n(x,y)=\max\{\rho(f^j(x),f^j(y)): j=0,...,n\}$.
For every $\e>0$ a set $Y\subset X$ is said to be $(n,\e)$-separated if for every $y_1,y_2\in Y$ such that $y_1\not=y_2$ it holds $\rho_n(y_1,y_2)\ge \e$. Define

\begin{equation}
P_{\rm {sep}}(f,\phi,\e):=\limsup_{n\to\infty}\frac1n \log \big(\sup_Y \sum_{y\in Y}\exp S_n\phi(y)\big),
\end{equation}

supremum taken over all $(n,\e)$-separated sets $Y\subset X$, and

$$
P_{\rm {sep}}(f,\phi)= \lim_{\e\to 0} P_{\rm {sep}}(f,\phi,\e).
$$
\end{definition}

Analogously
\begin{equation}
P_{\rm {spanning}}(f,\phi,\e):=\limsup_{n\to\infty}\frac1n \log \big(\inf_Y \sum_{y\in Y}\exp S_n\phi(y))\big),
\end{equation}
infimum taken over all $(n,\e)$-spanning sets $Y\subset X$, i.e. such that for every
$x\in X$ there exists  $y\in Y$ such that $\rho_n(x,y) < \e$, in other words such that
$\bigcup_{y\in Y}B_{\rho_n}(y,\e)=X$,
and
$$
P_{\rm {spanning}}(f,\phi)= \lim_{\e\to 0} P_{\rm {spanning}}(f,\phi,\e).
$$

It is easy to prove
\begin{theorem}[see e.g. \cite{Walters}]
For every continuous $\phi:X\to \R$
\begin{equation}
P_{\rm {spanning}}(f,\phi)=  P_{\rm {sep}}(f,\phi).
\end{equation}
This pressure depends on topology, but does not depend on metric.
\end{theorem}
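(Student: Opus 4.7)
The plan is to prove the two pressures coincide by showing two inequalities at each scale $\varepsilon$ (with a factor of $2$ to be absorbed in the limit), and then to handle the metric-independence by uniform equivalence. Throughout I will use that $\phi$ is uniformly continuous on the compact space $X$: for every $\delta>0$ there exists $\eta=\eta(\delta)>0$ such that $\rho(x,y)<\eta$ forces $|\phi(x)-\phi(y)|<\delta$; consequently $\rho_n(x,y)<\eta$ forces $|S_n\phi(x)-S_n\phi(y)|<n\delta$ for all $n$.

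First I would prove $P_{\rm spanning}(f,\phi,\varepsilon)\le P_{\rm sep}(f,\phi,\varepsilon)$. The pivotal observation is that any maximal (with respect to inclusion) $(n,\varepsilon)$-separated set $E$ is automatically $(n,\varepsilon)$-spanning, for otherwise some $x\in X$ would have $\rho_n(x,y)\ge\varepsilon$ for every $y\in E$ and $E\cup\{x\}$ would still be $(n,\varepsilon)$-separated, contradicting maximality. Hence the infimum defining the spanning quantity is dominated by $\sum_{y\in E}\exp S_n\phi(y)$ for one such $E$, which in turn is bounded by the supremum defining the separated quantity.

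Next I would prove the reverse inequality in the form $P_{\rm sep}(f,\phi,2\varepsilon)\le P_{\rm spanning}(f,\phi,\varepsilon)+\delta$ for $\varepsilon<\eta(\delta)$. Given any $(n,\varepsilon)$-spanning $F$ and any $(n,2\varepsilon)$-separated $E$, assign to each $x\in E$ some $y(x)\in F$ with $\rho_n(x,y(x))<\varepsilon$. The assignment $x\mapsto y(x)$ is injective, for two distinct points of $E$ are at $\rho_n$-distance at least $2\varepsilon$ and the balls $B_{\rho_n}(y,\varepsilon)$ thus contain at most one element of $E$. By uniform continuity, $\exp S_n\phi(x)\le e^{n\delta}\exp S_n\phi(y(x))$, so
\begin{equation*}
\sum_{x\in E}\exp S_n\phi(x)\le e^{n\delta}\sum_{y\in F}\exp S_n\phi(y).
\end{equation*}
Taking supremum over $E$, infimum over $F$, logarithm, dividing by $n$, and sending $n\to\infty$ gives the claimed inequality. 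Letting $\varepsilon\to 0$ and then $\delta\to 0$ completes equality of the two pressures.

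For the metric statement, suppose $\rho'$ is another metric inducing the same topology on the compact $X$. Then $\rho$ and $\rho'$ are uniformly equivalent: for every $\varepsilon>0$ there is $\varepsilon'>0$ such that $\rho'(x,y)<\varepsilon'$ implies $\rho(x,y)<\varepsilon$, with $\varepsilon'\to 0$ as $\varepsilon\to 0$. Consequently, an $(n,\varepsilon)$-separated set in $\rho$ remains $(n,\varepsilon')$-separated in $\rho'$, and symmetrically. Sending $\varepsilon\to 0$ shows $P^{\rho}_{\rm sep}=P^{\rho'}_{\rm sep}$. The sole obstacle is the systematic appearance of the factor $2$ (or of $\varepsilon$ vs.\ $\varepsilon'$), which is dissolved only by taking $\varepsilon\to 0$; this forces the pressure to be defined as a limit in $\varepsilon$ rather than at a fixed scale, and is the reason both formulations are needed simultaneously rather than treated at one scale.
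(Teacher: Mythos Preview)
Your proof is correct and follows exactly the route the paper indicates: the paper does not give a full argument but merely asserts that the equality follows from the double inequality \eqref{equal-pressures}, and your argument establishes precisely those two inequalities (with the uniform-continuity correction $+\delta$ that the paper suppresses but which disappears as $\varepsilon\to 0$). Your treatment of metric independence via uniform equivalence on compact spaces is also the standard one.
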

This equality follows from
\begin{equation}\label{equal-pressures}
P_{\rm {sep}}(f,\phi,2\e) \le P_{\rm {spanning}}(f,\phi,\e)\le P_{\rm {sep}}(f,\phi,\e).
\end{equation}

 \

 In this paper we shall discuss  $\phi=\phi_t= -t\log |f'|$
for all parameters $t>0$. This is slightly different from the previous situation in dimension 1 if $f$ is differentiable and has critical points, i.e. points $c\in X$ where the derivative of $f$ is 0. At these points $\phi$  is not continuous. We assign to $\phi$ the value $+\infty$ there. The notion $P_{\rm sep}$
does not make much sense in this case, as this quantity is equal to $+\infty$, by taking $Y$ containing some critical points, so it is replaced by the notion of tree pressure, see \eqref{Ptree}. However $P_{\rm spanning}$ defined as above happens to make sense and a part of this paper is devoted to explaining this.

\

We discuss two settings:

1. (Complex) $f$ is a rational mapping of degree at least 2 of the Riemann sphere $\widehat{\C}$ usually with the spherical metric. We consider $f$ on its Julia set $K=J(f)$.

2. (Real) $f$ is a real generalized multimodal map. Namely it is defined on a neighbourhood ${\bf U}\subset\bR$ of its invariant set $K$. We assume $f\in C^2$,
is non-flat at all its turning and inflection critical points, has bounded distortion property for its iterates, and is topologically transitive and has positive
topological entropy on $K$.

We assume that $K$ is a maximal invariant subset on a finite union of pairwise disjoint closed intervals $\hat I=\bigcup_j\hat I_j\subset {\bf U}$ whose ends are in $K$. (This maximality corresponds to Darboux property.)
By adjusting $\hat I$ and ${\bf U}$ we can assume there are no critical points outside $K$, no attracting  periodic orbits in ${\bf U}$ and no parabolic periodic orbits in ${\bf U}\setminus K$.

We write $(f,K)\in {\sA}^{\BD}_+$. The subscript + is to mark positive topological entropy. Sometimes we write $(f,K,\hat I,{\bf U})$. (In place of BD one can assume $C^3$ and all periodic orbits in $K$ hyperbolic repelling, denote the related class by ${\sA}^{3}_+$.)

For this real setting see \cite{PrzRiv:13}. Examples: sets in the spectral decomposition \cite{dMvS}.

\smallskip

In both settings the set of all critical points will be denoted by $\Crit(f)$.

\

The function $\phi_t$ is sometimes called {\it geometric potential} and the pressure is called {\it geometric pressure}, see e.g. \cite{PrzUrb:10}.

\smallskip

This name is justified by
$$
\exp S_n\phi_{t_0} (z)=|(f^n)'(z)|^{-t_0}\approx (\diam B_n(z))^{t_0},
$$
where $B_n(z):=\Comp_z(f^{-n}(B(f^n(z), \Delta)))$ for a constant $\Delta>0$ and appropriate $t_0$ (with pressure $P(\phi_{t_0})=0$). Here  $\Comp_z$ means the component in $\C$ or $\R$ containing $z$, called also a {\it pull-back}. We consider only pull-backs intersecting $K$.

\smallskip

There are several equivalent definitions of geometric pressure $P(\phi_{t})$, see \cite{PrzRivSmi:04}, \cite{PrzUrb:10} or \cite{PrzRiv:13} in the interval case.
One of them useful in this paper is
\begin{definition}[ hyperbolic pressure]\label{hyperbolic-pressure}
$$
P_{\rm{hyp}}(t):= \sup_X P(f|_X,-\phi_t|_X) ,
$$
supremum taken over all compact $f$-invariant (that is $f(X)\subset X$) isolated
(Cantor) uniformly hyperbolic subsets of $K$.

{\it Isolated} (or \emph{forward locally maximal}), means that there is a neighbourhood $U$ of $X$ such that
$f^n(x)\in U$ for all $n\ge 0$ implies $x\in X$.

A set $X$ is said to be {\it hyperbolic} or {\it expanding} if there is a constant $\lambda_X>1$
such that for all $n$ large enough and all $x\in X$ we have
$|(f^n)'(x)|\ge\lambda_X^n$.
\end{definition}

\

\section{Tree-pressure}

\subsection{Definitions}

We devote this section to studying a modified definition of pressure by separated sets which may not have sense,
called tree-pressure, see e.g. \cite{PrzUrb:10}.

In the real and complex settings we define

\begin{multline}\label{Ptree}
 P_{\rm tree}(z,t):=\limsup_{n \to \infty} \frac1n\log Q_n(z,t), \\
 \hbox{where}\ \ Q_n(z,t):=\sum_{y \in f^{-n}(z)\cap K} |(f^n)'(y)|^{-t}.
\end{multline}

\begin{definition}\label{safe}
A point $z\in K$ is said to be
\textit{safe}, or $\Crit(f)$-{\it safe} if
for every $\delta>0$ and all $n\ge n(\delta> 0$ large enough
\begin{equation}\label{safe1}
B(z, \exp (-\delta n))\cap \bigcup_{j=1}^n(f^j(\Crit(f))=\emptyset.
\end{equation}
\end{definition}

\smallskip

It immediately follows from the definitions that Hausdorff dimension of the set of points which are not safe is equal to 0.

\smallskip

In the complex setting the following holds

\begin{theorem} [\cite{Prz:99}, \cite{PrzRivSmi:04}]\label{tree-hyp-complex}
For every rational $f:\widehat{\C}\to\widehat{\C}$ of degree at least 2 and for every $z\in K=J(f)$ safe and $t>0$ it holds
$$
P_{\rm tree}(z,t)=P_{\rm hyp}(f,\phi_t).
$$
\end{theorem}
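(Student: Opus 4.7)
The plan is to prove the two inequalities separately. For the easy direction $P_{\rm tree}(z,t)\ge P_{\rm hyp}(f,\phi_t)$, I would fix an isolated uniformly hyperbolic $f$-invariant set $X\subset J(f)$ with expansion constant $\lambda_X>1$ and, for small $\e>0$, use standard expanding-repeller theory to produce, for each large $n$, an $(n,\e)$-separated set $E_n\subset X$ whose sum $\sum_{x\in E_n}|(f^n)'(x)|^{-t}$ grows at exponential rate $P(f|_X,\phi_t|_X)$. I would then transport this sum to one over $f^{-n}(z)\cap K$. Choose $\delta>0$ with $\delta\ll\log\lambda_X$ and invoke safeness of $z$: the ball $B(z,e^{-\delta n})$ misses $\bigcup_{j\le n}f^j(\Crit(f))$ for large $n$, hence every component $W$ of $f^{-n}(B(z,e^{-\delta n}))$ intersecting $K$ is a domain on which $f^n$ is conformal, and the Koebe distortion theorem applies on a slightly smaller concentric ball. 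By hyperbolicity on $X$, the pull-back $\Comp_x(f^{-n}(B(f^n(x),e^{-\delta n})))$ has diameter $\lesssim\lambda_X^{-n}e^{-\delta n}$, so for $\e$ chosen appropriately distinct $x\in E_n$ lie in distinct components, each containing a preimage $y\in f^{-n}(z)$ with $|(f^n)'(y)|$ comparable to $|(f^n)'(x)|$ via Koebe. Summing, taking $\log$, dividing by $n$, and passing to the supremum over $X$ gives the inequality.

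For the hard direction $P_{\rm tree}(z,t)\le P_{\rm hyp}(f,\phi_t)$, I would aim, given any $\eta>0$, to construct an isolated uniformly hyperbolic $X\subset J(f)$ with $P(f|_X,\phi_t|_X)\ge P_{\rm tree}(z,t)-\eta$. The idea is to exploit the inverse branches of $f^n$ defined on a small ball $B=B(z,e^{-\delta n})$: by the safeness of $z$ these branches $g_i\colon B\to g_i(B)$ are conformal with uniform Koebe distortion, and each is contracting with factor $\asymp|(f^n)'(y_i)|^{-1}$ for $y_i=g_i(z)$. I would restrict attention to the sub-family whose total mass $\sum|(f^n)'(y_i)|^{-t}$ is within $e^{-\eta n}$ of $Q_n(z,t)$, discard branches whose images stray outside a suitable compact piece of $J(f)$, and iterate (compose branches inside themselves) to obtain a conformal IFS. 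The resulting repeller $X$ is compact, $f$-invariant, isolated, and uniformly expanding, and its pressure for $-t\log|f'|$ approximates $\frac1n\log Q_n(z,t)$ from below up to an error tending to zero with $n$ and $\eta$.

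The main obstacle is this second step: ensuring that the heuristic IFS constructed from the "fat" part of $f^{-n}(z)$ actually assembles into a \emph{genuine} isolated hyperbolic set whose pressure is close to the block average $\frac1n\log Q_n(z,t)$. One must (i) throw away branches landing outside a prescribed return ball so that the chosen family is truly self-similar, (ii) verify that only a sub-exponentially small mass is lost in this pruning, and (iii) control overlap of the branch images so that $X$ inherits a Markov coding. The safeness of $z$ is what makes this feasible: at the working scale $e^{-\delta n}$ the target geometry is decoupled from the critical-value set, turning an a priori singular pull-back problem into a conformal Koebe-controlled one. Controlling derivative loss near $\Crit(f)$ along the orbit segments—via the rational-map version of Ma\~n\'e's lemma—is the secondary technical ingredient and is handled by absorbing critical passages into a single "telescope" pull-back whose distortion stays bounded thanks to the non-degeneracy of $f$ at its critical points.
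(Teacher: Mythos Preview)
The paper does not prove this statement; it is cited from \cite{Prz:99} and \cite{PrzRivSmi:04} as a known result, so there is no in-paper proof to compare against. On its own merits your outline has a gap in each direction. For $P_{\rm tree}\ge P_{\rm hyp}$ you tacitly assume that $\Comp_x f^{-n}(B(f^n(x),e^{-\delta n}))$ contains a point of $f^{-n}(z)$, which would force $z\in B(f^n(x),e^{-\delta n})$; nothing in your setup arranges this, since $f^n(x)\in X$ while $z$ is an arbitrary fixed safe point. The standard repair is to first use density of backward orbits of a non-exceptional $z$ to find $z_0\in f^{-N}(z)$ in a fixed shadowing neighbourhood of $X$ with $N$ bounded independently of $n$, and only then compare derivatives along $X$-orbits via Koebe.

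The more serious issue is in $P_{\rm tree}\le P_{\rm hyp}$. Your IFS cannot be iterated as described: the base $B=B(z,e^{-\delta n})$ shrinks with $n$, while the images $g_i(B)$ of the inverse branches are spread over all of $J(f)$, so the proportion of branches with $g_i(B)\subset B$ is exponentially small in $n$. Thus your pruning step (ii) loses essentially all the mass, not a sub-exponential fraction, and the resulting repeller carries far too little pressure. The arguments in \cite{Prz:99} and \cite{PrzRivSmi:04} avoid this either by routing through variational pressure and extracting a Katok-type horseshoe from an ergodic measure of nearly maximal free energy, or by working over a ball of \emph{fixed} radius and appending to each branch $g_i$ a single auxiliary backward path of bounded length returning near $z$, so that the return cost is a constant independent of $n$. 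Your telescope/Ma\~{n}\'e remark controls distortion of individual pull-backs but does not supply this missing return mechanism.
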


In particular in the complex case $P_{\rm tree}(z,t)$ does not depend on $z$ safe; it is constant except $z$ in a set of Hausdorff dimension 0. We denote this tree-pressure for $z$ safe by $P_{\tree}(t)$. 

\subsection{The real case: independence of a safe point}

In the generalized multimodal setting the above equality was known for $z$ being safe, safe forward (in case $K$ is not weakly isolated) and hyperbolic, see
\cite{PrzRiv:13}[Lemma 4.4]. We remind the definitions mentioned here, compare \cite{PrzRiv:13}:

\begin{definition} [hyperbolic]\label{hyperbolic} A point $z\in K$ is called hyperbolic  (or expanding)if there exist $\lambda>1$ and $\Delta>0$ such that for all $n>0$ \,  
$|(f^n)'(z)|\ge \Const\lambda^n$ and  $f^n$ maps diffeomorphically 
$\Comp_z(f^{-n}(B(f^n(z), \Delta)))$
onto $B(f^n(z), \Delta)$. 
\end{definition}

\begin{definition}[safe forward]\label{safe forward}
A point $z\in K$ is called \textit{safe forward} if there exists $\Delta>0$ such that $\ddist(f^j(z), \partial \hat I_K)\ge \Delta$ for all $j=0,1,...$.
\end{definition}

\begin{definition}[weak isolation]\label{weak isolation}
A compact set $K \subset\R$ is said to be {\it weakly isolated} for a continuous mapping on a neighbourhood of $K$ to $\R$ for which $K$ is forward invariant,
if there exists $\e>0$
such that every  $f$-periodic orbit $O(p)\subset B(K,\e)$ must be in $K$.
\end{definition}

Though the set of all expanding points has Hausdorff dimension equal to the hyperbolic dimension of $K$, i.e. supremum of Hausdorff dimensions of  isolated
uniformly hyperbolic subsets of $K$, being the first zero of the hyperbolic pressure, see the definition above,
the complementary set can also be large.

One of aims of this paper is to prove

\begin{theorem}\label{Independence-safe} For $(f,K)\in {\sA}^{\BD}_+$ without parabolic periodic orbits
(or for $f\in \sA^3_+$),
if $K$ is weakly isolated and $t>0$,
the tree pressure $P_{\rm tree}(z,t)$ does not depend on $z\in K$ safe. In particular
\begin{equation}\label{tree=hyp}
P_{\rm tree}(z,t)=P_{\rm hyp}(f,\phi_t).
\end{equation}
Moreover limsup can be replaced by lim in the definition of tree pressure, i.e the limit exists.
\end{theorem}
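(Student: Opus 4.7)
The plan is to derive \eqref{tree=hyp} and the independence of the safe basepoint by a direct comparison of $Q_n(z_1,t)$ and $Q_n(z_2,t)$ for two safe points, combined with \cite{PrzRiv:13}[Lemma 4.4], which already settles the case of a safe, safe-forward, and hyperbolic basepoint. The two technical ingredients will be a bounded-distortion pull-back lemma valid at every safe point, and a short controlled connecting branch between two safe points provided by topological transitivity, weak isolation, and the existence of hyperbolic invariant Cantor subsets of $K$.

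First I would establish the pull-back lemma. Fix $z\in K$ safe and a small $\delta>0$, and set $r_n:=\exp(-\delta n)$. By \eqref{safe1}, for large $n$ the ball $B(z,r_n)$ is disjoint from $\bigcup_{j=1}^n f^j(\Crit(f))$. Combined with non-flatness of critical points, the BD hypothesis (or $C^3$ plus hyperbolicity of periodic orbits), and the absence of parabolic and attracting orbits in ${\bf U}$, a telescoping pull-back argument of the type used inside the proof of \cite{PrzRiv:13}[Lemma 4.4] should yield: each component $W_n(y):=\Comp_y f^{-n}(B(z,r_n))$ with $y\in f^{-n}(z)\cap K$ is a univalent pull-back of $B(z,r_n)$ under $f^n$, and $|(f^n)'|$ on $W_n(y)$ has distortion at most $\exp(O(\delta n))$.

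Next, to compare $z_1$ and $z_2$, I would fix a uniformly hyperbolic isolated invariant Cantor set $X\subset K$ of positive topological entropy, which exists in $(f,K)\in\sA^{\BD}_+$ thanks to positive entropy together with weak isolation. For every small $\delta>0$ I would produce, using topological transitivity and the abundance ($\approx\exp(kh(f|_X))$) of periodic orbits of $f|_X$ of period $k$ landing in $B(z_1,r_n/2)$ with preassigned image near $z_2$, a point $q'\in B(z_1,r_n/2)\cap K$ with $f^k(q')=z_2$ for some $k=O(\delta n)$ and $|(f^k)'(q')|\le\exp(O(\delta n))$. Pulling $q'$ back along the inverse branch $y'\leftrightarrow z_1$ supplied by Step~1 furnishes $\tilde y\in f^{-(n+k)}(z_2)\cap K\cap W_n(y')$ with $|(f^{n+k})'(\tilde y)|\le\exp(O(\delta n))\,|(f^n)'(y')|$ for each $y'\in f^{-n}(z_1)\cap K$. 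Summing,
\[
Q_{n+k}(z_2,t)\ge\exp(-O(\delta n))\,Q_n(z_1,t),
\]
so, taking $\limsup$, $P_{\rm tree}(z_2,t)\ge P_{\rm tree}(z_1,t)-O(\delta)$; letting $\delta\to 0$ and swapping $z_1\leftrightarrow z_2$ gives $P_{\rm tree}(z_1,t)=P_{\rm tree}(z_2,t)$. Combined with \cite{PrzRiv:13}[Lemma 4.4] at a safe, safe-forward, hyperbolic $z_0\in K$, this yields \eqref{tree=hyp}. For the $\limsup=\lim$ assertion, the same construction supplies a lower bound $Q_n(z,t)\ge\exp(n(P_{\rm hyp}(t)-\varepsilon))$ for \emph{all} sufficiently large $n$ (since $n+k$ in the estimate above ranges over all large integers), while the matching upper bound $Q_n(z,t)\le\exp(n(P_{\rm hyp}(t)+\varepsilon))$ comes from the already established $\limsup\le P_{\rm hyp}(t)$.

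The main obstacle will be the construction of the connecting branch in the real setting. In the complex case \cite{Prz:99}, density of repelling periodic orbits in $J(f)$ and uniform expansion on hyperbolic subsets make this step essentially free. In the real setting the branch must simultaneously (i) stay inside $\hat I$, (ii) have derivative controlled by $\exp(O(\delta n))$, and (iii) remain in $K$. I would handle (i) from safeness of $z_1$ and the shrinkage of pull-back components near $\partial\hat I$, (ii) from the no-parabolic hypothesis together with uniform expansion on $X$, and (iii) from weak isolation, which guarantees that the constructed approximating repelling orbits actually lie in $K$. Beyond this step, the remainder is distortion bookkeeping inherited from Step~1.
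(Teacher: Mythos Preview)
Your overall strategy---compare $Q_n$ at two safe points and then invoke \cite{PrzRiv:13}[Lemma 4.4]---is the same as the paper's. Step~1 is essentially right: safeness of $z$ does force the $n$-step pull-backs of $B(z,r_n)$ to be univalent, and with a slightly larger safe ball as Koebe space the BD hypothesis gives the distortion you claim.

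The real gap is Step~2. You need a preimage $q'\in f^{-k}(z_2)\cap K$ inside the \emph{exponentially small} ball $B(z_1,\tfrac12\exp(-\delta n))$, with $k=O(\delta n)$. Your proposed mechanism---periodic orbits of a fixed hyperbolic Cantor set $X\subset K$---does not produce such points unless $z_1\in X$, which there is no reason to expect for an arbitrary safe $z_1$. Periodic points of $f|_X$ lie in $X$, and $X$ may miss $B(z_1,r_n/2)$ entirely for all large $n$. More generally, there is no available uniform statement saying that $k$-th preimages of an arbitrary safe $z_2$ are $\exp(-\delta n)$-dense in $K$ for $k=O(\delta n)$; the map is not uniformly expanding on $K$. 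Even granting $q'$, you would still have to show the pulled-back point $\tilde y$ stays in $K$: the intermediate intervals $f^j(W_n(y'))$ can overhang $\partial\hat I$, and ``weak isolation'' by itself does not settle this without an auxiliary periodic-orbit construction.

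The paper avoids this difficulty by exploiting the interval order instead of exponential density. It fixes preimages $z_0,z_0'\in f^{-j}(z)$, $j\le N$, at \emph{bounded} depth that sandwich $w$ (the second safe point), so $w\in W_0:=[z_0,z_0']$ with $|W_0|<\delta$; it then pulls back $W_0$. The comparison $|(f^n)'(z_n)|^{-t}\gtrsim|(f^n)'(w_n)|^{-t}$ comes from a Rivera-Letelier shrinking-neighbourhoods/telescope argument that tolerates critical captures along the way (pull-backs of $W_0$ need \emph{not} be univalent). Showing the boundary preimages $z_n\in K$ is done by manufacturing a nearby periodic orbit and invoking weak isolation. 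A separate Case~2 treats the situation where $w$ cannot be sandwiched (it sits next to a large gap), by passing to a bounded forward iterate of $w$ or recognising $w$ as pre-periodic. Your scheme would need a genuinely new argument at Step~2 to replace this interval-specific machinery.
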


As in the complex case we denote this tree-pressure for safe points by $P_{\tree}(t)$.

\smallskip

Before proving this theorem let us recall the following definition valid in the real and complex cases

\begin{definition}
[backward Lyapunov stable]\label{backward Lyapunov}
$f$ is said to be {\it backward Lyapunov stable} if for every $\e>0$ there
exists $\delta>0$ such that for every $z\in K, n\ge 0$ and $W=\Comp_z f^{-n}(B(f^n(z),\delta)$ (the balls and components in $\R$ or $\C$)
$\diam W <\e$.
\end{definition}

In the real case this property holds for $B(f^n(z), \delta)$ not containing any parabolic periodic point, see \cite{PrzRiv:13}[Lemma 2.10].

\smallskip

In the sequel we call $W$ a pull-back of the interval $B(f^n(z),\delta)$ for $f^n$ containing $z$.

\begin{proof}[Proof of Theorem \ref{Independence-safe}]

The inequality $P_{\rm tree}(z,t) \ge P_{\rm {hyp}}(f,\phi_t)$ is obviously true for every $z\in K$, under a mild non-exceptionality condition, weaker than safe, see \cite{PrzRiv:13}[Lemma 4.4].

\smallskip

Thus,  it is enough to prove that for all $w, z\in K$ both being safe it holds
\begin{equation}\label{ineqtree}
P_{\rm tree}(w,t)\le P_{\rm tree}(z,t).
\end{equation}

It follows from the topological transitivity that given any $\delta'>0$ there exists
$N(\delta')$ such that
$A=A(z,\delta'):=\bigcup_{j=0,...,N}f^{-j}(z)\cap K$ is $\delta'$-dense in $K$ (i.e. $\bigcup_{y\in A}B(y,\delta')\supset K$; in other words $(0,\delta')$-spanning), see \cite{PrzRiv:13}[Remark 2.6, Proposition 2.4].

\smallskip

Fix an arbitrary $0<\delta'\le \delta$, where $\delta$ is small, chosen to $\e$ in Definition \ref{backward Lyapunov}, and $\e$ satisfies the weak isolation condition \ref{weak isolation}.

\smallskip

We have two cases:

Case 1.  There exist $z_0,z'_0\in A(z,\delta')$ such that $z_0\le w \le z'_0$ and $|z_0-z'_0|<\delta$.

\smallskip

In this case, if $\delta$ is small enough,  as in
Definition of backward Lyapunov stability, all pull-backs of $W_0=[z_0,z'_0]$ are
shorter than $\e$.
Now we use a procedure by Rivera-Letelier \cite{R-L:12}, see also \cite{GPR:14}[Appendix C]. We consider
pull-backs of $\hat W_0=2W_0$ (the twice longer interval with the same origin) for $f^i$ along a backward trajectory  until for certain $i_1$ the pull-back $\hat W^1_{i_1}$ for $f^{i_1}$ captures a critical point.
Next we consider pull-backs of $2W_{i_1}$, where $W_{i_1}\subset \hat W^1_{i_1}$ is a pull-back of $W_0$.   We arrive after time $i_2-i_1$ at
$\hat W^2_{i_2}$  containing a critical point, etc.

Using bounded distortion between consecutive captures of critical points, more precisely for
$f^{i_2-i_1-1}$ on $f(W^2_{i_2})$,  and the inequality for every $x\in W_i$ true for any pull-back of $W_0$ for $f^i$

\begin{equation}\label{one-step}
|f'(x)|\le \Const \frac{\diam W_i}{\diam W_{i-1}}
\end{equation}
we prove that 
for $z_n\in\partial W_n$
and $\alpha>0$ arbitrarily close to 0 for $\delta$ small enough that the differences of times of consecutive captures of each critical points are bounded below by a constant arbitrarily large (possible due to absence of attracting periodic orbits),

$$
|(f^n)'(z_n)|^{-t}\ge (\exp -\alpha n) \Bigl(\frac{\diam W_n}{\diam W}\Bigr)^t
$$

Since $w$ is safe we get also, replacing $W_n$ by the appropriate pull-back $V_n$ of
$V=B(w,\exp (-\e_1 n))\cap W_0$, since $f^n$ is invertible of bounded distortion on $V_n$,
$$
({\diam V_n})^t \ge
\Const \exp (-t\e_1 n) |(f^n)'(w_n)|^{-t}.
$$
Hence
$$
|(f^n)'(z_n)|^{-t}\ge (\exp -(\alpha+t\e_1) n)|(f^n)'(w_n)|^{-t}.
$$

Summing over all $n$'th preimages $w_n$ of $w$ in $K$, taking in account that the number
 of $w_n$'s can be at most $\exp \e_2 n$ in each pull-back $W_n$, gives the demanded estimate.
 More precisely:
 \[\begin{split}
 Q_n(w,t)\le  & 2Q_n(z_0,t) + 2Q_n(z_0',t) \\
 & \le 4 \max_{j=0,...,N} Q_{n+j}(z,t) L^{jt}\le 
 4Q_{n+N}(z,t)L^{Nt}
 \end{split}\]
 for $L=\sup |f'|$. Acting with $\frac 1n \log$ and passing with $n$ to $\infty$ in limsup  we get the inequality
 (\ref{ineqtree})  for $w$ and $z$ hence after the interchanging their roles, the equality.
 Similarly we obtain the equality of lower limits, writing 
 $Q_n(z,t)\ge \frac14 L^{-Nt} Q_{n-N}(w,t)$. 
 
 But they coincide with $P_{\rm {hyp}}(t)$ for $z$ safe, safe forward and hyperbolic, see the beginning of this section. Hence limsup and liminf coincide and are equal to $P_{\rm {hyp}}(t)$ for every $z$ safe.

\

It is a priori not clear whether the points $z_n$ belong to $K$. This trouble can be dealt with
as follows,
compare the proof of \cite{GPR:14}[Lemma 3.2]. 
Take an arbitrary repelling periodic not post-critical orbit ${\sO}\subset K$ and a backward trajectory of a point $p \in \sO$ in $K$ accumulating at $z_0$ (and $z'_0$). To simplify notation we can assume that $p$ is a fixed point for $f$. 

Consider  $w_n\in [z_n,z'_n]=W_n$ as above.  
Choose $w_{n+N_p}\in f^{-N_p}(w_n)\cap B(p,r_p)$, where $r_p$ is such that there exists a branch $g$ of $f^{-1}$ with  $g(p)$,  mapping $B(p,r_p)$ into itself, with its iterates converging to $p$. Since $z$ 
is safe, $z_0$ (and $z_0'$) is not postcritical. 

Next choose a backward trajectory $(y_0,y_1,...)$ of $p$ so that $z_0,z_0'$ are its limit points.
Choose the intervals $B=B(z_0,\xi)$ and $B'=B(z_0',\xi)$ so short that the pull-back $B_{n+N_p}$ of the one of them containing $z_{n+N_p}$ is in $B(p,r_p)$ and $f^{n+N_p}$ has no turning critical points in it. Next choose $r'$ and $n'$ such that a pull-back $W''$ of $B(p,r')$ for $f^{n'}$ is in $B$ (or $B'$). Finally choose $m$ such that $g^m(B(p,r_p))\subset B(p,r')$. So the adequate branch $G$ of $f^{-(n+N_p+m+n')}$ maps $B$ (or $B'$) into itself, so a corresponding fixed point $p_\xi$ for $f^{n+N_p+m+n'}$ exists in $B$ (or $B'$). Write $f^n(z_n)=z_0$. A part of the periodic trajectory of $p_\xi$ shadows the backward trajectory $(z_0,...,z_{n+N_r})$. By the weak isolation property $p_\xi\in K$. The shadowing error tends to 0 as $\xi\to 0$. Thus $z_n\in K$.



\

Case 2. The safe point $w\in K$  is not between two points $z_0,z_0'$, in notation of Case 1. We assume $\delta'\le \delta/4$. Then
 the interval $(w-\delta+2\delta', w-\delta')$ (or $(w+\delta',w +\delta-2\delta')$) is disjoint from $K$.
 Call a component of $\R\setminus K$ of length at least $\delta/4$ a {\it large gap}. By the boundness of $K$ there are at most a finite number $\Gamma_\delta$ of them. Denote the union of the boundary points of large gaps by $\partial G$ and the union of the large gaps by $G$. For $\delta'$ satisfying  additionally
 \begin{equation}\label{delta'}
 L\delta'+\delta'<\min_{x\in\partial G} \ddist (f(\partial G)\setminus \partial G, \partial G).
 \end{equation}
 implying
 $\bigcup_{x\in \partial G, f(x)\notin\partial G} B(f(x),\delta'))\cap B(\partial G, \delta')=\emptyset$,  we conclude that either for some $m:0<m\le 2\Gamma_\delta$, $f^m(w)$ satisfies the assumption of Case 1 (is between $z_0,z_0'$), or all $f^j(w)$ for $0\le j<2\Gamma_\delta$ are $\delta'$ close to
 $\partial G$ hence $w$ is pre-periodic $w':=f^{j_1}(w)=f^{j_2}(w)$, and the length of its forward orbit is bounded by $2\Gamma_\delta$.

Then use $\hat z\in f^{-\kappa n}(z)$  which is $\exp -\eta n$ close to $w$ i.e. in a "safe" ball, for $0<\eta < \kappa\chi$, where $\chi$ is Lyapunov exponent at $w$. Taking $\kappa$ arbitrarily small (positive) we can replace $z_0$ by $\hat z$ when comparing the tree pressure at $z$ and $w$. We use  $|f'|\le L$ and $t\ge 0$. We use also the fact that by the safety condition the distortion
$|(f^n)'(\hat z_n)|/|(f^n)'(w_n)|$ is uniformly bounded for $w_n$. 

(This allows not to use $\hat z'\in f^{-k}(z)$ on the other side of $w$ maybe not existing for $k$ of order at most $\kappa n$.)

\end{proof}

 \subsection{On the weak isolation condition}

 Notice that proving that $z_n\in K$ above, we use the existence of $z=z_{n+N_r}$ such that $f^{N_r}(z)=z_n$. A priori we cannot exclude that only $z_n'$ happens in the images. An example is $f(x)=ax(1-x)$ for $a<4$ close to 4, on a neighbourhood $U$ of $\hat I=[f^2(1/2),f(1/2)]$. Then points slightly to the left of $f^2(1/2)$ may have no preimages.

 Fortunately in the proof above we use only those $z_i$ which are boundary points of pull-backs of $[z_0,z_0']$ containing $w_i\in K$.

 \smallskip

 In the above proof to know that $z_n,z_n'$ belong to $K$ we could refer to \cite{GPR:14}[Corollary 3.3] in the form of the proposition below (interesting in itself), true under the additional assumption, see \cite{GPR:14}[Subsection 1.4], that no point in $\partial \hat I$ is weakly $\Sigma$-exceptional, for $\Sigma$ being the set of all turning critical points.

\begin{definition}
Given an arbitrary finite set $\Sigma\subset K$, we call a nonempty set $E\subset K$ \emph{weakly $\Sigma$-exceptional}, if $E$ is non-dense in $K$ and satisfies
\begin{equation}\label{except-condition}
	(f|_K)^{-1}(E)\setminus  \Sigma \subset E
	.
\end{equation}
We call $x\in K$ weakly $\Sigma$-exceptional
 if it is contained in a weakly $\Sigma$-exceptional set.
\end{definition}


\begin{proposition}[On K-homeomorphisms]\label{prop:K}
Let $(f,K)\in \sA_+$ satisfies weak isolation condition.
  Let $W$ be an arbitrary interval sufficiently short (closed, half-closed or open),
  not containing in its closure weakly $\Sigma$-exceptional points for $\Sigma$ being the set of turning critical points in $\partial\hat I$,
  such that $f$ is monotone on $W$ and $\cl W\cap K\not=\emptyset$. Then $f|_W$ is a $K$-homeomorphism, that is $f(W\cap K)=f(W)\cap K$.
\end{proposition}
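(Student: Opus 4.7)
The inclusion $f(W\cap K)\subseteq f(W)\cap K$ is immediate from the forward-invariance of $K$. For the reverse, fix $y\in f(W)\cap K$ and, using monotonicity of $f|_W$, let $x=(f|_W)^{-1}(y)\in W$; one must show $x\in K$. Since $f(x)=y\in K$ gives $f^n(x)=f^{n-1}(y)\in K\subseteq\hat I$ for every $n\ge 1$, the maximality of $K$ in $\hat I$ reduces the task to showing $x\in\hat I$.

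If $W$ is chosen shorter than the smallest gap between distinct components of $\hat I$, only two configurations arise: either $W\subseteq\hat I_j$ for some $j$, in which case $x\in W\subseteq\hat I$ and we are done; or $W$ straddles a single endpoint $e\in\partial\hat I_j\subseteq K$, and $W\setminus\hat I$ is a subinterval of $W$ lying to one side of $e$ outside $\hat I$. Only the second case is nontrivial, and the obstruction is $x\in W\setminus\hat I$. The proposition thus reduces to the assertion $f(W\setminus\hat I)\cap K=\emptyset$ in the straddling configuration.

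Assume toward contradiction that $f(W\setminus\hat I)\cap K\ne\emptyset$. The plan is to exhibit a weakly $\Sigma$-exceptional subset of $K$ meeting $\cl W$, contradicting the hypothesis. The candidate is the backward saturation of $e$ in $K$ along the pull-back multi-map modulo turning critical points,
\[
E_e\,=\,\bigcup_{n\ge 0}\bigl((f|_K)^{-1}\setminus\Sigma\bigr)^{n}(\{e\}),
\]
which satisfies $(f|_K)^{-1}(E_e)\setminus\Sigma\subseteq E_e$ by construction; the only thing to verify is that $E_e$ is non-dense in $K$. The local input is that the existence of a $K$-preimage of a point of $f(W\setminus\hat I)$ on the outside-$\hat I$ side of $e$ encodes a folding of $f$ at $e$: any $K$-preimage of such a point other than the single monotone branch $(f|_W)^{-1}$ (which lands outside $K$) must come through a branch of $f^{-1}$ blocked by a turning critical point in $\Sigma$. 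Propagating this obstruction backward through the chain defining $E_e$, every chain originating at $e$ is forced to terminate in $\Sigma$ after finitely many steps; combining with the weak isolation condition and the Rivera--Letelier periodic-point shadowing argument used at the end of the proof of Theorem \ref{Independence-safe}, one concludes that $E_e$ has closure nowhere dense in $K$. With $e\in E_e\cap\cl W$ and $E_e$ non-dense, $E_e$ is weakly $\Sigma$-exceptional in $K$, contradicting the hypothesis on $\cl W$.

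The principal obstacle is the non-denseness of $E_e$ in $K$: this is a global statement derived from a purely local folding picture at $e\in\partial\hat I$, and requires the weak isolation condition in an essential way to preclude the creation of repelling periodic orbits outside $K$ which would enlarge $E_e$ past the critical skeleton defined by $\Sigma$. This is the heart of \cite{GPR:14}[Corollary 3.3], on which the present proposition is modelled and from which the argument above is imported.
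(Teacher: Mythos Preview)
Your reduction to the straddling configuration with $e\in\partial\hat I_j$ matches the paper's setup (the paper writes $a$ for your $e$ and $W''$ for your $W\setminus\hat I$). But the core of your argument has a genuine gap: you never establish that $E_e$ is non-dense in $K$, and the reasoning offered does not lead there. Your ``local input'' paragraph concerns $K$-preimages of points of $f(W\setminus\hat I)\cap K$, whereas $E_e$ is built from $K$-preimages of $e$ itself; these are different objects, and you draw no link between them. The assertion that ``every chain originating at $e$ is forced to terminate in $\Sigma$ after finitely many steps'' is unsupported and in fact cannot hold here: the hypothesis on $\cl W$ says precisely that $e$ is \emph{not} weakly $\Sigma$-exceptional, which by definition means no non-dense set containing $e$ satisfies the backward-invariance condition; in particular your $E_e$, which does satisfy that condition and contains $e$, must be dense. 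Nothing in the contradiction assumption $f(W\setminus\hat I)\cap K\ne\emptyset$ constrains the backward orbit of $e$, so you cannot extract non-denseness of $E_e$ from it. The invocation of weak isolation and shadowing at the end is not connected to anything preceding it in the paragraph.

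The paper takes the opposite tack: it \emph{uses} the non-exceptionality of $a=e$ rather than trying to contradict it. After reducing (by shrinking $W$) to the case where $f(a)$ is an accumulation point of $f(W'')\cap K$, it picks a repelling periodic orbit $\sO\subset K$, a backward trajectory in $K$ of a point of $\sO$ accumulating in $f(W'')\cap K$, and --- this is where non-exceptionality of $a$ enters --- a backward trajectory of $a$ in $K$ accumulating on $\sO$. Concatenating these yields a branch of some $f^{-(n+m+1)}$ mapping $W''$ into itself, hence a periodic orbit $Q$ through $W''$ lying arbitrarily close to $K$. Weak isolation then forces $Q\subset K$, giving $W''\cap K\ne\emptyset$; the contradiction is with $W''\cap K=\emptyset$, not with the hypothesis on $\cl W$.
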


\begin{proof} It is sufficient to consider $W$ closed. The assertion of the Proposition follows for
$W':= W\cap \hat I=W\cap\hat I_j$ by the maximality of $K$
(notice that $W$ short enough intersects only one interval $\hat I_j$).
By definition $W'':=W\setminus W'$ is disjoint from $K$. For $W$ short enough $W''$ has one component or it is empty. Suppose it is non-empty. The case $f(W'')$
intersects $K$, but $f(a)$ is not a limit point of $f(W'')\cap K$ can be eliminated by considering $W$ short enough. Here $a$ is the boundary point of $\hat I_j$ belonging to $\cl W''$. We use the assumption that
the family $\hat I_j$ is finite.

Therefore we need only to consider the case $f(a)$ is an accumulation point of $f(W'')\cap K$ (in particular $f|_K$ is not open at $a$). In this case however there exists a periodic orbit $Q$ passing through $W''$ arbitrarily close to $K$. The proof is
the same as the proof of \cite{GPR:14}[Lemma 3.2] and similar to the proof of Theorem \ref{Independence-safe}.
Briefly: we choose a repelling periodic orbit
$\sO \subset K$.
Next choose a backward trajectory $(y_0,y_1,...)$ of a point $p\in \sO$ with a limit point in $f(W'')\cap K$ and a backward trajectory $(z_0,z_1,...)$ of $a$ converging to $Q$. This allows us to find a backward trajectory of $W''$  at a time $n$ approaching to $\sO$ along $z_j$ and next at a time $m$ being in $f(W'')$. So $W''$ after the time $n+m+1$ enters itself.
Hence there exists a branch of $f^{-(m+n+1)}$ mapping $W''$ into itself, yielding the existence of $Q$.

So $Q$ is in $K$ by the weak isolation condition.
We obtain a point in $K\cap W''$, a contradiction.
\end{proof}

Remark that in the example $f(x)=ax(1-x)$ discussed above the assumption of the lack of weakly $\Sigma$-exceptional points does not hold and the assertion of Proposition \ref{prop:K} fails for $W=[f^2(1/2)-\delta,f^2(1/2)]$.

\

\section{Geometric pressure via spanning sets. The complex case}

In the real case in the previous section we used  the property: backward Lyapunov stability,
Definition \ref{backward Lyapunov}. In the complex case this property need not hold.
 So
the following weaker version occurs useful.

\begin{definition}\label{weak Lyapunov}
A rational mapping $f:\widehat{\C}\to\widehat{\C}$
is said to be {\it weakly backward Lyapunov stable} {\bf wbls}, if
for every $\delta>0$ and $\e>0$ for all $n$ large enough and every disc $B=B(x,\exp -\delta n)$ centered at $x\in J(f)$, for every $0\le j \le n$ and every component $V$ of $f^{-j}(B)$ it holds $\diam V\le\e$.
\end{definition}

Denote $P_{\rm spanning}(f|_K, \phi_t)$ by $P_{\rm spanning}(t)$, both in the real and complex case.
The following is the main theorem in this section

\begin{theorem} \label{spanning-complex}
For every rational mapping $f:\widehat{\C}\to\widehat{\C}$ of degree at least 2 and for every  $t>0$
it holds
$P_{\spanning}(t) \ge P_{\tree}(t)$.
If $f$ is weakly backward Lyapunov stable
then the opposite inequality holds, hence
\begin{equation}
P_{\spanning}(t) = P_{\tree}(t) 
\end{equation}
\end{theorem}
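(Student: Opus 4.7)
The proof splits into two inequalities. For the unconditional bound $P_{\spanning}(t) \ge P_{\tree}(t)$, my plan is to exploit the identity $P_{\tree}(t) = P_{\rm hyp}(f, \phi_t)$ from Theorem \ref{tree-hyp-complex} and reduce to the classical pressure theory on hyperbolic subsystems. Fix any compact $f$-invariant isolated expanding $X \subset J(f)$. Being hyperbolic, $X$ is disjoint from $\Crit(f)$, so $\phi_t|_X$ is continuous and the classical Walters equality gives $P(f|_X, \phi_t|_X) = P_{\spanning}(f|_X, \phi_t|_X)$. From any $(n,\e)$-spanning $Y \subset J(f)$ one extracts an $(n, 2\e)$-spanning $Y' \subset X$ by replacing each $y_0 \in Y$ lying $\rho_n$-close to $X$ with a nearest point $x_0 \in X$; uniform hyperbolicity and shadowing yield bounded distortion $|(f^n)'(x_0)|^{-t} \le C_\e |(f^n)'(y_0)|^{-t}$ with $C_\e$ independent of $n$. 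Thus $P_{\spanning}(f|_X, \phi_t|_X, 2\e) \le P_{\spanning}(f|_{J(f)}, \phi_t, \e)$; letting $\e\to 0$ and taking supremum over $X$ gives $P_{\tree}(t) = P_{\rm hyp}(t) \le P_{\spanning}(t)$.

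For the converse $P_{\spanning}(t) \le P_{\tree}(t)$ under wbls, the plan is to build explicit $(n,\e)$-spanning sets from preimages of a safe point. Fix $z \in J(f)$ safe, $\delta>0$ small, and $r_n := \exp(-\delta n)$. Safety makes every pull-back of $B(z, r_n)$ under $f^j$, $1 \le j \le n$, univalent; wbls then bounds the diameter of each such pull-back by $\e$ for $n$ large. So the pull-back $W_y$ of $B(z, r_n)$ under $f^n$ containing $y \in f^{-n}(z) \cap J(f)$ lies in $B_{\rho_n}(y,\e)$. Uniform topological exactness of $f$ on $J(f)$ provides $N=N(r_n)$ with $J(f) \subset \bigcup_{k=0}^N f^{-k}(B(z, r_n))$; the same safety-plus-wbls argument at length $n+k$ shows $Y := \bigcup_{k=0}^N f^{-(n+k)}(z) \cap J(f)$ is $(n,\e)$-spanning, using the trivial inclusion $B_{\rho_{n+k}}(y,\e) \subset B_{\rho_n}(y,\e)$. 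Setting $L := \sup_{J(f)} |f'|$ and using $|(f^n)'(y)|^{-t} \le L^{kt} |(f^{n+k})'(y)|^{-t}$,
\[
\sum_{y \in Y} |(f^n)'(y)|^{-t} \le (N+1) L^{Nt} \max_{0 \le k \le N} Q_{n+k}(z,t).
\]
Taking $\frac{1}{n}\log$, letting $n\to\infty$, and using the exactness rate $N(r_n) = O(\delta n)$, one obtains $P_{\spanning}(t,\e) \le (1+O(\delta)) P_{\tree}(t) + O(\delta t \log L)$; sending $\delta,\e \to 0$ closes the inequality.

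The main technical hurdle is ensuring $N(r_n) = O(\delta n)$, so that $L^{Nt}$ and the time-shift from $Q_{n+k}$ to $Q_n$ contribute only $O(\delta)$-errors in pressure. For hyperbolic rational maps this follows immediately from the standard rate $N(r) = O(\log(1/r))$; in the general rational case it rests on quantitative mixing estimates for $f|_{J(f)}$, and the bookkeeping requires care. A secondary technical point occurs in the lower-bound construction: the assignment $y_0 \mapsto x_0$ must be done so that $Y'$ really is $(n, 2\e)$-spanning for $X$, which is verified by a triangle inequality within the uniform hyperbolic neighborhood of $X$.
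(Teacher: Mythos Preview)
Your treatment of $P_{\spanning}(t)\ge P_{\tree}(t)=P_{\rm hyp}(t)$ is sound and matches the paper's Part~II argument in spirit (the paper injects an $(n,2\e)$-separated subset of $X$ into the spanning set $Y$, you project $Y$ onto an $(n,2\e)$-spanning set for $X$; the bounded-distortion comparison near a uniformly hyperbolic set is identical).

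The genuine gap is in your upper bound under wbls. First, the inclusion $J(f)\subset\bigcup_{k=0}^{N}f^{-k}(B(z,r_n))$ is simply false: any repelling periodic orbit disjoint from $B(z,r_n)$ (and there are infinitely many once $r_n$ is small) never enters $B(z,r_n)$ under forward iteration, so lies outside the union for every $N$. Topological exactness gives the \emph{opposite} direction, $f^{N}(B(z,r_n))\supset J(f)$, equivalently that $f^{-N}(z)$ is $r_n$-dense in $J(f)$; this is what your spanning construction would actually need. Second, and more seriously, even after this repair the hurdle you flag as ``main technical'' is fatal in general. For a rational map with a parabolic periodic point $p\in J(f)$ of petal order $k$, preimages of $z$ approach $p$ only at rate $N^{-1/k}$ along the local inverse branch fixing $p$, so $N(r_n)$ is of order $r_n^{-k}=\exp(k\delta n)$, not $O(\delta n)$. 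The factor $L^{Nt}$ in your displayed estimate then grows doubly exponentially in $n$ and the bound is vacuous.

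The paper sidesteps this entirely by abandoning the single-base-point strategy. For each $n$ it selects an $r_n/2$-net $\hat X\subset J(f)$ of points each satisfying the safety inequality \eqref{safe1} at level $n$ (uniform perfectness of $J(f)$, Lemma~\ref{uniformlyperfect}, is invoked to find such points even inside the $r_n$-neighbourhood of the forward critical orbit). Then $\#\hat X=O(\exp 2\delta n)$, and $Y=f^{-n}(\hat X)$ is $(n,\e)$-spanning by wbls. The key analytic input replacing your $N$-control is Lemma~\ref{unif}: a quasi-hyperbolic chain argument (Lemma~\ref{quasihyperbolic}) showing $Q_n(x,t)\le(\exp n\xi)\,Q_n(z,t)$ uniformly over all $x$ safe at level $n$. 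This trades your uncontrolled depth parameter $N$ for the cardinality $\#\hat X$, whose contribution to pressure is only $2\delta$ and hence vanishes as $\delta\to 0$.
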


\begin{proof}

{\bf I.} First we prove
$P_{\spanning}(t) \le P_{\tree}(t).$ This is the CONSTRUCTION part of the proof, where we construct an $(n,\e)$-spanning set not carrying much more "mass" than $f^{-n}(\{z_0\})$.
This corresponds to the right hand side inequality in (\ref{equal-pressures}), where we can just consider
as the $(n,\e)$-spanning sets  maximal $(n,\e)$-separated sets.

\smallskip

Fixed an arbitrary $\e>0$ and $\delta>0$, by the property {\bf wbls} (Definition \ref{weak Lyapunov}) we have  for $n$ large enough for every $x\in J(f)$ and every pull-back $V$ of $B(x,\exp(-n\delta/2))$ for $f^j, j=0,...,n$,\ $\diam V<\e$.

Denote $\sB:=\bigcup_{c\in\Crit(f)\cap J(f)}\bigcup_{j=1,...n} B(c,j)$,
where $B(c,j):=B(f^j(c), r)$ where $r:=\exp (-n\delta))$ as in the safety assumption.

We can easily find a set $X\subset J(f)\setminus \sB$ which is $r/2$-spanning for $\rho$ the standard metric on the Riemann sphere and
$\#X\le \Const \exp 2n \delta $.

The set $B(X, r/2)$ covers $J(f)\setminus \sB$. By bounded distortion for every $x\in X$ and $x'\in B(x,r/2)$ and branch $g$ of $f^{-n}$ on $B(x,r/2)$, $C_{\dist}^{-1}\le \frac{|g'(x)|}{|g'(x')|}\le C_{\dist}$.

\smallskip

Let $B^1,B^2,...B^N$ be all the components of $\sB$.

\smallskip

Assume first for simplicity that $J(f)$ is connected.

Clearly for every $1\le k\le N$,\ $\diam B^k\le r\cdot n\#(\Crit(f)\cap J(f))$.
By the connectivity of $J(f)$,\ there exists $x^k\in \partial B^k\cap J(f)$ if $n$ is large enough.
For $n$ large we have also $\diam B^k<\exp (-n\delta/2)$. Hence the diameters of all pull-backs of $B^k$ for $f^j, j=1,...,n$ are less than $\e$. Let $\hat X=X\cup \bigcup_k\{x^k\}$. Then $Y=f^{-n}(\hat X)$ is $(n,\e)$-spanning.

We have for an arbitrary $\xi>0$
\begin{equation}\label{key-spanning}
\sum_{y\in Y} |(f^n)'(y)|^{-t} \le \#(\hat X) \exp n(P_{\tree}(t)+\xi)
\end{equation}
for $n$ large enough.
This uses the fact that the convergence in \eqref{Ptree} is uniform for all $x$ safe with the same $\delta$, see Lemma \ref{unif}. (Here we consider $x\in \hat X$ depending on $n$, so we abuse the terminology; we consider safe for each $n$ separately, just satisfying (\ref{safe1}).) Considering $n\to \infty$ and passing with $\delta$ and $\e$ to 0, we end the proof.

\smallskip

Now consider the general case, allowing $J(f)$ being disconnected.

\phantom\qedhere\end{proof}

\begin{definition}
A compact set $X\subset\C$ in the complex plane is said to be {\it uniformly perfect} if there exists $M>0$ such that there is no annulus $D\subset \C$ of modulus bigger than $M$, separating $X$. In other words
there is no $A=\{z\in\C: r_1<|z-z_0|<r_2\}$ such that
$\log \frac{r_2}{r_1}>M$, and $X\cap\{|z-z_0|\le r_1\}\not=\emptyset$ and
$X\cap\{|z-z_0|\ge r_2\}\not=\emptyset$.
\end{definition}

\begin{lemma}\label{uniformlyperfect}
Let $X\subset\C$ be a compact uniformly perfect set. Then there exists $\kappa>0$ such that for every
$0<a\le 1$, every $m$ large enough and every $\hat x\in X$ there exists in the Euclidean metric an
$\exp( -m)$-separated set $X_{m,a}\subset B(\hat x,\exp -(1-a)m)$ such that
$\# X_{m,a}\ge \exp \kappa am$.
\end{lemma}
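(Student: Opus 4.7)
The plan is to exploit uniform perfectness to build a dyadic tree of points of $X$, doubling the cardinality at each geometric refinement of scale. First I would restate the hypothesis metrically: pick $K > e^{M}$, so that for every $y \in X$ and every $s > 0$ such that $X$ contains a point at distance $\geq s$ from $y$, the annulus $\{s/K < |z - y| < s\}$ must meet $X$; otherwise it would separate $X$ while having $\log(s/(s/K)) = \log K > M$, contradicting the definition of uniform perfectness.

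Next, set $s_k := r_0 (4K)^{-k}$ and construct by induction on $k \geq 0$ a family $F_k \subset X \cap B(\hat x, r_0)$ of cardinality $2^k$, pairwise $s_k$-separated. Take $F_0 := \{\hat x\}$. Given $F_k$, apply the above annulus statement at every $p \in F_k$ with outer scale $s_k/4$ to produce a partner $q_p \in X$ with $s_k/(4K) < |q_p - p| < s_k/4$, and set $F_{k+1} := F_k \cup \{q_p : p \in F_k\}$. Within a new pair $\{p, q_p\}$ the mutual distance is at least $s_k/(4K) = s_{k+1}$; for distinct $p, p' \in F_k$ one checks $|p - q_{p'}| \geq s_k - s_k/4 \geq s_{k+1}$ and $|q_p - q_{p'}| \geq s_k - s_k/4 - s_k/4 = s_k/2 \geq s_{k+1}$, so $F_{k+1}$ is indeed $s_{k+1}$-separated. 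A geometric-series bound $\sum_{j \geq 0} s_j/4 = r_0/(4 - 1/K) < r_0$ (valid for $K > 1$) keeps $F_k \subset B(\hat x, r_0)$.

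Choosing $k$ maximal with $s_k \geq e^{-m}$ yields $k \geq am/\log(4K) - 1$, so $\#F_k = 2^k \geq \exp(\kappa a m)$ for any $\kappa < (\log 2)/\log(4K)$, once $m$ is large (depending on $a$) to absorb the additive constant. Taking $X_{m,a} := F_k$ gives the desired $e^{-m}$-separated set inside $B(\hat x, e^{-(1-a)m})$.

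The main technical issue is checking at each step that $X$ has a point at distance $\geq s_k/4$ from the given $p \in F_k$, which is precisely the hypothesis required to invoke the annulus statement. For $k \geq 1$ this is automatic, since $F_k$ itself contains another point at distance $\geq s_k > s_k/4$ from $p$. The only delicate case is the base step $k = 0$, which needs $\diam(X) \geq r_0/2$; this holds once $m$ is so large that $r_0 = e^{-(1-a)m} < 2\diam(X)$, automatic for $a < 1$ with $m$ large. In the boundary case $a = 1$, or whenever $r_0 > 2\diam(X)$, one simply starts the induction at scale $s_0 := \diam(X)/2$ instead of $r_0$, losing only a constant multiplicative factor in the count, which is absorbed into $\kappa$ for $m$ large.
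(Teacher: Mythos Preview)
Your proof is correct and follows essentially the same strategy as the paper: both build a dyadic tree of points of $X$ by repeatedly invoking uniform perfectness to split each point into two at geometrically decreasing scales, then count the leaves. The only differences are cosmetic---the paper uses scale ratio $\exp(3M)$ where you use $4K=4e^{M}$, and the paper disposes of the base-case issue by normalizing $\diam X\ge 2$ rather than by your explicit case analysis---but the structure of the argument and the resulting constant $\kappa\sim(\log 2)/M$ are the same.
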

\begin{proof} We can assume $\diam X\ge 2$.
If $X$ is uniformly perfect with constant $M$, then for every $x\in X$ and $i\ge 0$ such that $(3i+1)M<am$ we can find $x'_i\in X$ such that
$$
\exp (-m+3iM) \le |x-x'_i|\le \exp (-m+(3i+1)M).
$$
Now we define $X_{m,a}$. Each its element will be encoded by a block of 0's and 1's of length $\I+1$ where
$\I:= [am/3M]-1$ where $[\cdot]$ means the integer part, and
denoted $x(\nu_0,...,\nu_{\I})$, where $\nu_j=0$ or $1$. We define these points by induction using codings
of length $1,2,...$  and finally $\I+1$ which will be our final choice.

For $x=\hat x$ set $x_0(0)=x$ and $x_0(1)=(x)'_{\I}$. The subscript at $x$ denotes the length of the block of the coding symbols minus 1, here length 1.
Having defined $x_i( \nu_0,...,\nu_i)$ define
\[\begin{split}
x_{i+1}( \nu_0,...,\nu_i,0) & =x_i( \nu_0,...,\nu_i) \, \, \, \hbox{and}\,\, \\ &
x_{i+1}( \nu_1,...,\nu_i,1)  =(x_i( \nu_0,...,\nu_i))'_{\I-(i+1)}.
\end{split}\]

Notice that all the points $x(\nu_1,...,\nu_{\I})$ are within the distance at least $\exp(-m)$ from each other.
Indeed, if $0\le i \le \I$ is the first index with digits $\nu_i$ different for two such points $y=x( \nu_0,...,\nu_{\I})$ and
 $z=x( \nu^{\ast}_0,...,\nu^{\ast}_{\I})$, then, say $\nu_i=0, \nu^{\ast}_i=1$,   by construction, setting $x_{-1}=x_0$ for $i=0$,
\begin{multline*}
|y-z| \ge |x_{i}( \nu_0,...,\nu_{i-1},0)|  \\ - (x_{i}( \nu_0,...,\nu_{i-1},1)| -
|y-x_{i}( \nu_0,...,\nu_{i-1},0)| -
|z-x_{i}( \nu_0,...,\nu_{i-1},1)|
\\
\ge
\exp (-m+3(\I-i)M) - 2\sum_{s=i+1}^{\I} \exp(-m+3(\I-s)M) \ge
\\
\exp (-m) \bigl(\exp 3(\I-i)M - 2 \frac{\exp (3(\I-i)M)-1}{\exp (3M)-1}\bigr)
\ge \exp(-m)
\end{multline*}
for $M\ge 1$.

Thus, define $X_{m,a}=\{x(\nu_0,...,\nu_{\I})\}$. Notice that
$$
\# X_{m,a}=2^{[am/3M]+1}=\exp( (\log 2) [am/3M]+1)\ge
\exp \kappa am
$$
with $\kappa$ arbitrarily close to $(\log 2)/3M  $ for $am$ large enough. The corresponding assertion of the lemma is proved.

Finally notice that for each $y\in X_{m,a}$
$$
|\hat x-y|< \exp (-m+(3\I+2)M)\le \exp (-m +am) = \exp -(1-a)m.
$$

\end{proof}

\begin{proof}[Continuation of Proof of Theorem \ref{spanning-complex}] We deal now with the non-connected
$J(f)$ case.
Let $m:=-\log r$, i.e. $\exp -m = r$ and $m=n\delta$.
Then  by Lemma \ref{uniformlyperfect} applied to $a=1/2$ for each $x\in J(f)$ there is a set $X(x)$ of at least
$\exp \kappa m/2 = \exp \kappa n\delta/2$
of $r$-separated points in $J(f)\cap B(x, \exp -m/2)$ in particular in $B(x,\exp -n\delta/2)$.
Since $n\#(\Crit(f)\cap J(f))\ll \exp \kappa n\delta$ for $n$ large enough, then for each $x=f^j(c), c\in\Crit(f)\cap J(f), j=1,...,n$, there is a point $\hat x$ in $B(x,\exp -n\delta/2)\setminus \sB$. Now we repeat the proof as in the connected $J(f)$ case, with $\hat x$ playing the role of $x^k$.

\smallskip

{\bf II.} Now we prove the opposite inequality, namely
$$
P_{\spanning}(t) \ge P_{\tree}(t).
$$
In fact we shall prove
$$
P_{\spanning}(t) \ge P_{\rm hyp}(t)
$$
which is enough due to Theorem (\ref{tree-hyp-complex}).


By \cite[Proposition 2.1]{PrzRivSmi:04} for every $\xi>0$ and $t>0$ there exists an $f$-invariant isolated hyperbolic set $X\subset J(f)$ such that
$P(f|_X,\phi_t|_X) \ge P_{\rm {hyp}}(t)-\xi$. Then for every $\e>0$ small enough for every $n\ge 0$ large enough there exists
 an $(n,2\e)$-separated set $X_n\subset X$ such that
 $\sum_{y\in X_n} |(f^n)'(y)|^{-t} \ge \exp P_{\rm{sep}}(f|_X,\phi_t|_X)-\xi$.

 Therefore for every $(n,\e)$-spanning set $Y_n\subset J(f)$ 
for every $y\in X_n$, there exists $y'\in Y_n$ which is $(n,\e)$-close to $y$.

Hence by triangle inequality the selection $y\mapsto y'$ is injective.
By the hyperbolicity of $X$, if $\e$ is small enough, there is a constant $C$ such that for every $n$ and $y\in X_n$ it holds
$|(f^n)'(y')|/|(f^n)'(y)|\le C$. This, after passing to limits, proves 
$P_{\spanning}(t) \ge  P_{\rm{sep}}(f|_X,\phi_t|_X)-\xi$. Hence letting $\xi\to 0$ and choosing appropriate $X$ we obtain $P_{\spanning}(t) \ge P_{\rm{hyp}}(t)$.

\smallskip

We considered here all $(n,\e)$-spanning sets, so it is natural to call this Part II of the proof the ALL part.
Notice that this proof corresponds to the left hand side inequality in (\ref{equal-pressures}).


\end{proof}

To end this section let us provide the lemma we have already referred to

\begin{lemma}\label{unif}
For every $t>0$ and $\xi>0$ there exists $\delta_0>0$ and $n_0\ge 0$ such that for every
$0<\delta\le\delta_0$ and $n\ge n_0$  and every
$z_1,z_2\in J(f)$ such that $z=z_i, i=1,2$ satisfies \eqref{safe1}, it holds for $Q$ defined in \eqref{Ptree}
$$
\exp -n\xi \le \frac{Q_n(z_1,t)}{Q_n(z_2,t)}\le \exp n\xi.
$$
\end{lemma}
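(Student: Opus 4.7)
The plan is to establish uniform two-sided bounds of the form
\[
\exp\bigl(n(P_\tree(t)-\xi/2)\bigr)\lesssim Q_n(z,t)\lesssim\exp\bigl(n(P_\tree(t)+\xi/2)\bigr)
\]
over all safe $z\in J(f)$, from which the stated ratio estimate follows by dividing. In the context in which Lemma \ref{unif} is invoked (part I of Theorem \ref{spanning-complex}) one may freely assume \emph{wbls}; this is what I use below.

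For the \emph{upper bound}, I would execute the pull-back argument from the proof of Theorem \ref{Independence-safe} between $z$ and a fixed safe reference point $z_0\in J(f)$. Topological exactness of $f|_{J(f)}$ provides $N=N(\delta_1)$ such that $A(z_0,\delta_1):=\bigcup_{j=0}^N f^{-j}(z_0)\cap J(f)$ is $\delta_1$-dense in $J(f)$. Pick $\hat z\in A(z_0,\delta_1)$ with $|\hat z-z|\le\delta_1$ and $f^k(\hat z)=z_0$, $k\le N$. Set $B:=B(z,2\delta_1)$, which contains both $z$ and $\hat z$, and pull $B$ back along each backward orbit $z\leftarrow\cdots\leftarrow w_n$ with $w_n\in f^{-n}(z)\cap J(f)$. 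Running the complex-variable Rivera--Letelier capturing procedure (wbls keeps the pull-back diameters uniformly small; non-flatness at critical points together with the absence of attracting cycles makes the spacing between consecutive captures arbitrarily large once $\delta_1$ is small; Koebe on the free annulus supplies the one-step estimate analogous to \eqref{one-step}) gives
\[
|(f^n)'(w_n)|^{-t}\ge\exp(-\alpha n)\bigl(\diam W_n/\diam B\bigr)^t
\]
with $\alpha>0$ as small as desired. Inside $W_n$ lies a preimage $\hat w_n\in f^{-n}(\hat z)\cap J(f)$ since $\hat z\in B$; the same telescoping applied jointly to $w_n$ and $\hat w_n$ yields $|(f^n)'(\hat w_n)|^{-t}\ge\exp(-\alpha' n)|(f^n)'(w_n)|^{-t}$, and with $L:=\sup_{J(f)}|f'|$ and $|(f^k)'(\hat z)|\le L^N$ the injective map $w_n\mapsto\hat w_n$ embeds $f^{-n}(z)\cap J(f)$ into $f^{-(n+k)}(z_0)\cap J(f)$, so summing produces
\[
Q_n(z,t)\le L^{Nt}\exp\bigl((\alpha+\alpha')n\bigr)\,Q_{n+k}(z_0,t).
\]
Since $z_0$ is fixed and safe, Theorem~\ref{tree-hyp-complex} gives $Q_{n+k}(z_0,t)\le\exp((n+k)(P_\tree(t)+\xi/8))$ for $n$ large. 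Choosing $\delta_1$ small and $n_0$ large so that $L^{Nt}$ and $\exp((\alpha+\alpha')n)$ together contribute at most $\exp(n\xi/4)$, I conclude $Q_n(z,t)\le\exp(n(P_\tree(t)+\xi/2))$ uniformly in safe $z$.

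For the \emph{lower bound}, I would approximate $P_\tree(t)$ by pressures of uniformly hyperbolic isolated subsets. By \cite[Proposition~2.1]{PrzRivSmi:04}, fix an isolated hyperbolic $X\subset J(f)$ with $P(f|_X,\phi_t|_X)\ge P_\tree(t)-\xi/8$, and for each large $n$ choose an $(n,\e)$-separated $X_n\subset X$ with $\sum_{y\in X_n}|(f^n)'(y)|^{-t}\ge\exp(n(P_\tree(t)-\xi/4))$. Using topological transitivity and the shadowing/closing property in the uniform neighborhood of $X$ supplied by isolation, for each $y\in X_n$ I would select $w_y\in f^{-n}(z)\cap J(f)$ shadowing the backward orbit of $y$; bounded distortion on $X$ gives $|(f^n)'(w_y)|\asymp|(f^n)'(y)|$, and hyperbolic expansivity makes $y\mapsto w_y$ injective for $\e$ small, while backward invariance of $J(f)$ ensures $w_y\in J(f)$. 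Summing yields $Q_n(z,t)\ge\exp(n(P_\tree(t)-\xi/2))$ uniformly in safe $z$, with implicit constants absorbed into the exponent for $n$ large.

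Dividing the two uniform estimates gives $\exp(-n\xi)\le Q_n(z_1,t)/Q_n(z_2,t)\le\exp(n\xi)$ for $n\ge n_0$, as required. The principal obstacle will be verifying that every constant appearing in the pull-back --- the Rivera--Letelier capture spacing, the Koebe distortion constant on the free annulus, the maximal number of captures in $[0,n]$, and the bounds on $L$ and $N$ --- depends only on intrinsic data of $f$ (non-flatness exponents, absence of attracting cycles, wbls) together with $\delta_1$, and not on the particular safe basepoint $z$. This should indeed hold because these quantities are functions of the critical and postcritical combinatorics of $f$ rather than of the basepoint. A secondary technical point is the shadowing construction in the lower bound: ensuring that $w_y$ really exists in $J(f)$ and that $y\mapsto w_y$ is injective, which goes through by the standard hyperbolic arguments already used in \cite{Prz:99} and \cite{PrzRivSmi:04}.
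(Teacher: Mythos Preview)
Your upper-bound argument has a genuine gap. You set $B=B(z,2\delta_1)$ with $\delta_1$ \emph{fixed} (it is dictated by the density of $A(z_0,\delta_1)$) and then assert that ``wbls keeps the pull-back diameters uniformly small''. But wbls (Definition~\ref{weak Lyapunov}) only controls pull-backs of discs of radius $\exp(-\delta n)$ for $f^j$ with $j\le n$; it says nothing about pull-backs of a fixed-size disc. The paper explicitly remarks just before Definition~\ref{weak Lyapunov} that full backward Lyapunov stability can fail in the complex setting, and that is exactly what the interval Rivera--Letelier procedure needs: it relies on the pull-backs of the \emph{original} interval $W_0$ staying short, so that after each capture one can restart from $2W_{i_k}$ with $W_{i_k}$ small. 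Without this, the pull-backs $W_n$ of your $B$ may be large, the distortion control between captures is lost, and the comparison $|(f^n)'(\hat w_n)|^{-t}\gtrsim|(f^n)'(w_n)|^{-t}$ is unjustified. A repair would require producing $\hat z\in f^{-\lfloor\kappa n\rfloor}(z_0)\cap B(z,\exp(-\delta n))$ uniformly over safe $z$ (so that safety makes all pull-backs univalent), which is a separate nontrivial step you have not supplied. Your lower bound also needs adjustment: to get $w_y\in f^{-n}(z)$ shadowing $y\in X_n$ you would need $z$ close to $f^n(y)\in X$, which is not given; the standard fix introduces a bounded delay $m$ and lands in $f^{-(n+m)}(z)$ instead.

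The paper's proof takes a completely different and much shorter route that uses neither wbls nor $P_{\tree}(t)$. Set $W=\bigcup_{j=1}^n f^j(\Crit(f))$, so $m=n\,\#\Crit(f)$. By Lemma~\ref{quasihyperbolic} (a quasi-hyperbolic distance estimate), any two points $z_1,z_2\in\widehat{\C}\setminus W$ are joined by a chain of at most $k\le C\sqrt{m\log(1/r)}$ discs $B_j$ with $2B_j\cap W=\emptyset$. Since $2B_j$ misses the forward critical orbit up to time $n$, every branch of $f^{-n}$ is univalent on $2B_j$ and Koebe gives $Q_n(s_j,t)/Q_n(s_{j+1},t)\le C_{\dist}^t$ for adjacent chain points $s_j$. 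Telescoping along the chain with $r=\exp(-n\delta)$ yields $Q_n(z_1,t)/Q_n(z_2,t)\le C_{\dist}^{t(k+1)}\le C_{\dist}^{tC'n\sqrt\delta+t}\le\exp(n\xi)$ once $\delta$ is small enough. This compares $z_1$ and $z_2$ directly, with no pull-back of a macroscopic ball and no appeal to hyperbolic subsets.
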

To prove this lemma we use the fact being a part of \cite[Lemma 3.1]{Prz:99} (see also \cite{HalHay:00} and \cite[Geometric Lemma]{PrzRivSmi:03})

\begin{lemma}\label{quasihyperbolic} There exists $C>0$ such that for every set $W$ of $m>0$ points in $\widehat{\C}$ and $0<r<1/2$ such that $m\ge \log 1/r$, for every $z_1,z_2\in \widehat{\C}\setminus W$ there exists a sequence of discs in the Riemann sphere metric $B_1=B(q_1,r_1),...,B_k=B(q_k,r_k)$ such that for every $j=1,...,k$ each $2B_j:=B(q_j,2r_j)$ is disjoint from $W$, $z_1\in B_1, z_k\in B_k$,
$B_j\cap B_{j+1}\not=\emptyset$ for all $j=1,...,k-1$ and
\begin{equation}
k\le C\sqrt{m} \sqrt{\log 1/r}.
\end{equation}
In other words the quasi-hyperbolic distance between $z_1$ and $z_2$ in $\widehat{\C}\setminus W$ is bounded by $\Const\sqrt{m} \sqrt{\log 1/r}.$
\end{lemma}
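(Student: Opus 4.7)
My approach reduces the lemma to bounding the quasi-hyperbolic distance in $\widehat{\C}\setminus W$, and then attacks that bound via a multi-scale estimate combined with Cauchy--Schwarz.

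First, I translate between the chain-of-discs formulation and the quasi-hyperbolic length. Given a chain $B_1,\dots,B_k$ with consecutive discs intersecting and $2B_j\cap W=\emptyset$, each disc has $r_j\le\tfrac12\mathrm{dist}(q_j,W)$, and the centers trace a polygonal path $\gamma$ whose quasi-hyperbolic length $\int_\gamma |dz|/\mathrm{dist}(z,W)$ is bounded by a constant times $k$. Conversely, any rectifiable path from $z_1$ to $z_2$ in $\widehat{\C}\setminus W$ of quasi-hyperbolic length $\ell$ can be discretized into $O(\ell)$ overlapping discs $B(p,\tfrac{1}{3}\mathrm{dist}(p,W))$. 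Hence it suffices to exhibit a path $\gamma$ from $z_1$ to $z_2$ in $\widehat{\C}\setminus W$ with quasi-hyperbolic length $\le C\sqrt{m\log(1/r)}$, using implicitly the setup $\mathrm{dist}(z_i,W)\gtrsim r$ so that the endpoints lie in the ``tame'' region.

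To construct $\gamma$, I work dyadically. Set $\rho_i=2^{-i}$ for $i=0,1,\dots,I$ with $I=\lceil\log(1/r)\rceil$. At scale $\rho_i$, the ``obstacles'' are the balls $B(q,\rho_i)$ for $q\in W$; a path crossing the annulus $\{z: \rho_i \le \mathrm{dist}(z,W)\le 2\rho_i\}$ contributes $O(1)$ quasi-hyperbolic length per arc of Euclidean length $\rho_i$. The goal is to route $\gamma$ so that at scale $\rho_i$ it executes at most $k_i$ such arcs, with
\[
\sum_{i=0}^{I} k_i^{\,2}\;\le\; C\,m.
\]
Granted this, Cauchy--Schwarz gives
\[
\ell_{qh}(\gamma)\;\le\; C\sum_i k_i\;\le\; C\sqrt{(I{+}1)\sum_i k_i^{\,2}}\;\le\; C'\sqrt{m\log(1/r)},
\]
as desired.

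The crux---and the step I expect to be the main obstacle---is establishing $\sum_i k_i^2\le Cm$. The intuition is an accounting argument: each of the $m$ points of $W$ is ``charged'' for forcing a detour at essentially one distinguished scale, namely the dyadic scale at which the path first has to bend around it. By an averaging/transversality argument over admissible directions (or by choosing at each step the local direction with maximal clearance to $W$), one can guarantee that the detours at different scales are essentially disjoint, so the total number of charges sums to $O(m)$. Combined with the fact that the number of relevant scales is only $I+1\lesssim\log(1/r)$, Cauchy--Schwarz converts this into the $\sqrt{m\log(1/r)}$ bound; configurations achieving equality are those in which the $m$ obstacles are packed at a single scale $\rho_i\sim 1/\sqrt{m}$ and force $\sqrt{m}$ detours there. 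Implementing this routing rigorously, and in particular certifying that $\gamma$ never leaves the component of $\widehat{\C}\setminus W$ containing $z_1,z_2$ while it navigates between scales, is the technical heart of the argument.
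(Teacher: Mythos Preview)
The paper does not prove this lemma: it is quoted from \cite[Lemma~3.1]{Prz:99} (with parallel references to \cite{HalHay:00} and \cite[Geometric Lemma]{PrzRivSmi:03}) and then used as a black box in the proof of the preceding lemma. So there is no in-paper argument to compare your sketch against directly.

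On its own merits, your reduction to quasi-hyperbolic length and the dyadic/Cauchy--Schwarz scaffolding are sound, but the step you yourself flag as the crux, $\sum_i k_i^2 \le Cm$, is not supported by the heuristic you give. Saying that ``each of the $m$ points of $W$ is charged for forcing a detour at essentially one scale'' yields only $\sum_i k_i \le Cm$, which is strictly weaker: if all $m$ charges fall at a single scale $i_0$ one gets $k_{i_0}=m$ and hence $\sum_i k_i^2=m^2$, not $m$. You note that in such a clustered configuration the path should be rerouted to a coarser scale so as to incur only about $\sqrt{m}$ detours, but you give no rule for doing this in a general configuration, and once you reroute, the bookkeeping of ``one charge per point at one scale'' collapses (the same point now contributes at several scales as the path approaches and recedes). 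In effect, the Cauchy--Schwarz step manufactures the target $\sqrt{m\log(1/r)}$ only if one already has an $\ell^2$ bound on the sequence $(k_i)$, and nothing in your outline delivers that bound. As written this is a plausible-looking framework with the essential idea still missing, not a proof.
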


\begin{proof}[Proof of Lemma \ref{unif}]
Given $n$ set $W=\bigcup_{j=1,...,n}f^j(\Crit(f))$
and 

\noindent $m=n\#\Crit(f)$. Using Lemma \ref{quasihyperbolic} and Koebe distortion lemma we
obtain for a sequence $s_j\in B_j\cap B_{j+1}$  for $j=1,...,k-1$ and $s_0=z_1, s_{k+1}=z_2$, and for a distortion constant $C_{\dist}>0$
$$
\frac{Q_n(s_j,t)}{Q_n(s_{j+1},t)}\le C_{\dist}^t,
$$
hence
$$
\frac{Q_n(z_1,t)}{Q_n(z_2,t)}\le C_{\dist}^{t(k+1)}.
$$
Hence, for $r=\exp -n\delta$, due to
$$
k\le C\sqrt{n\#\Crit(f)} \sqrt{n\delta} = C'n\sqrt{\delta}
$$
for $C'=C\sqrt{\#\Crit(f)}$, we obtain
$$
\frac{Q_n(z_1,t)}{Q_n(z_2,t)}\le C_{\dist}^{tC'n\sqrt{\delta}+1}\le \exp n\xi
$$
for $\delta$ small enough and $n$ satisfying \eqref{safe1} and large enough
to satisfy the latter inequality.

\end{proof}


\section{Weak backward Lyapunov stability and further corollaries in the complex case}

For every $x\in \widehat{\C}$ and a rational mapping $f:\widehat{\C}\to\widehat{\C}$ define the lower Lyapunov exponent by
$$
\underline\chi(x):=\liminf_{n\to\infty} \frac1n\log |(f^n)'(x)|.
$$

Let us start with the following 

\begin{proposition}\label{Prop}
For every rational mapping $f:\widehat{\C}\to\widehat{\C}$ of degree at least 2, if for  every critical point $c\in J(f)$ the lower Lyapunov exponent $\underline\chi(f(c))$ is non-negative, then  weak backward Lyapunov stability {\bf wbls} holds.
\end{proposition}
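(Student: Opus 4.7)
The plan is to proceed by contradiction. Suppose \textbf{wbls} fails: there exist $\delta>0$, $\e>0$, and sequences $n_k\to\infty$, $x_k\in J(f)$, $0\le j_k\le n_k$ together with a component $V_k$ of $f^{-j_k}(B_k)$ (where $B_k:=B(x_k,\exp(-\delta n_k))$) such that $\diam V_k\ge\e$. I would index the pull-back chain by $V_k^{(i)}$, $0\le i\le j_k$, with $V_k^{(0)}=B_k$ and $V_k^{(j_k)}=V_k$; the diameters grow from $\le 2\exp(-\delta n_k)$ up to $\ge\e$. On any maximal stretch during which no $V_k^{(i)}$ meets $\Crit(f)$, the pull-back $f^{i_2-i_1}:V_k^{(i_2)}\to V_k^{(i_1)}$ is univalent, so by the Koebe distortion theorem (applied after slight enlargement of the target disc) its diameter ratio is comparable to $|(f^{i_2-i_1})'|^{-1}$ at any reference point. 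Hence the blow-up from $\exp(-\delta n_k)$ up to $\e$ must be driven by at least one \emph{critical capture}, i.e., an index $\ell\le j_k$ with $V_k^{(\ell)}\cap\Crit(f)\ne\emptyset$.

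Focus first on the earliest such capture $\ell_1\ge 1$, with $c\in V_k^{(\ell_1)}\cap\Crit(f)\cap J(f)$. Then $f^{\ell_1-1}:V_k^{(\ell_1-1)}\to B_k$ is univalent and $f(c)\in V_k^{(\ell_1-1)}$. Koebe gives
\[
\diam V_k^{(\ell_1-1)}\le C\exp(-\delta n_k)/|(f^{\ell_1-1})'(f(c))|.
\]
The hypothesis $\underline\chi(f(c))\ge 0$ yields, for any $\eta>0$ and $k$ large, $|(f^{\ell_1-1})'(f(c))|\ge\exp(-\eta(\ell_1-1))\ge\exp(-\eta n_k)$, so $\diam V_k^{(\ell_1-1)}\le C\exp(-(\delta-\eta)n_k)$. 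The local behaviour $f(z)-f(c)\sim(z-c)^{d_c}$ at the critical point (applicable because $V_k^{(\ell_1-1)}$ is already exponentially small, hence contained in a fixed-size disc around $f(c)$) then forces $\diam V_k^{(\ell_1)}\le C'\exp(-(\delta-\eta)n_k/d_c)$, still exponentially small in $n_k$.

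Iterating the same analysis at each subsequent capture, interleaved with Koebe control on critical-free stretches, I would obtain $\diam V_k\le C''\exp(-\gamma n_k)$ for some $\gamma=\gamma(\delta,\eta,d_{\max},\#(\Crit(f)\cap J(f)))>0$. For $k$ large this contradicts $\diam V_k\ge\e$. The main obstacle is this iteration: after the first capture one must carry the same Koebe/Lyapunov reasoning through each further capture, including the possibility that the same critical point is captured multiple times along the chain. This is handled by applying $|(f^m)'(f(c))|\ge\exp(-\eta m)$ uniformly for all $m\le n_k$ (for each of the finitely many $c\in\Crit(f)\cap J(f)$), by using the finiteness of $\Crit(f)\cap J(f)$ to bound the number of essential captures, and by absorbing any finitely many ``early'' captures, where the asymptotic Lyapunov estimate has not yet kicked in, into a bounded distortion prefactor.
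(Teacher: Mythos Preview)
Your first-capture estimate is fine, but the iteration breaks down because the number of critical captures along a length-$n$ pull-back chain is \emph{not} bounded by $\#(\Crit(f)\cap J(f))$. A single recurrent critical point can be captured $\Theta(n)$ times. If you formalize your recursion --- writing $L_s=-\log\diam V_k^{(\ell_s)}$, so that a univalent stretch of length $m_s$ followed by a capture gives $L_s\ge (L_{s-1}-\eta m_s)/d-O(1)$ --- then after $K$ captures you obtain
\[
L_K\ \ge\ \frac{\delta n_k}{d^{K}}-\frac{\eta n_k}{d}-O(K),
\]
and once $K$ grows with $n_k$ the first term vanishes while the $O(K)$ (from Koebe constants) and the $\eta n_k/d$ term dominate. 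You get no lower bound on $L_K$, hence no contradiction. The finiteness of $\Crit(f)\cap J(f)$ is used in the paper for a much weaker purpose: it bounds only the number of \emph{consecutive short} captures (those with inter-capture time below a fixed threshold $N$), because a short capture forces $f^{j_s+1}(\hat c)=c$ exactly, and a cycle of such relations among critical points in $J(f)$ would produce a periodic critical point.

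The paper avoids this blow-up by the \emph{shrinking neighbourhoods} telescoping: rather than tracking the pull-back of the original disc through all captures, after each capture at time $j_1+\cdots+j_s$ one restarts from a fresh round disc $B_s$ of radius $\kappa\exp(-n_s\delta/\tau)$, where $n_s$ is the \emph{remaining} time. The nested radii $a_j=\prod_{i\le j}(1-i^{-2})$ furnish the Koebe collar at every step, and the rescaling to remaining time is exactly what makes the estimates compose: the Lyapunov loss $\exp(\eta j_{s+1})$ on the next stretch is beaten by the margin built into the restart radius, uniformly in the number of restarts. Your outline is missing this mechanism; without it the argument does not close.
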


\begin{proof}
Take arbitrary $\e,\delta>0$ and $x\in J$, and consider large $n$. 
Consider $B:=B(x,\exp -n\delta)$ and an arbitrary $y\in f^{-n}(x)$. 
For every $0<j\le n$ consider $U_j=B(x,a_j \exp -n\delta)$, where $a_j=\prod_{s=1}^j (1-s^{-2})$. Let $V_j$ be the pull-back of $U_j$ for $f^j$ containing $f^{n-j}(y)$. Let $j=j_1 > 0$ be the least non-negative integer for which $V_{j+1}$ contains a critical point $c$. Then $c\in J(f)$ if $n$ is large enough.  Indeed, the only other possibility would be a critical point $c\notin J(f)$ attracted to a parabolic periodic orbit.  Then however the convergence of $f^n(c)$ to this orbit, and moreover to $J(f)$ would be subexponential, so $f^{s}(c)\notin B(x, \exp -n\delta )$ for $s=1,2,...,n$ if $n$ is large enough.

(In fact we can omit this part of the proof, since the proof below does not use $c\in J(f)$. It uses only $\underline\chi(f(c))\ge 0$, true if $f^n(c)$ converges to a parabolic periodic orbit.)

\smallskip

Then, for diameters and derivatives in the spherical metric,
\begin{equation}\label{shrinkingneighb}
\frac{\diam f(V_{j+1})}{\diam U_{j+1}}\le C_1 (j+1)^{-8} |(f^j)'(f(c))|^{-1}\le C_2\exp j\xi.
\end{equation}
The term $C_1(j+1)^{-8}=C_1((j+1)^{-2})^4$ results from Koebe's distortion bounds, see e.g. 
\cite[Lemma 6.2.3]{PrzUrb:10} for the spherical setting. The `isolating annulus' is
$$
a_{j+1} \exp -n\delta<|z-x|< a_j \exp -n\delta.
$$
This method of controlling distortion was introduced in \cite[Definition 2.3]{Prz:98} and developed and called in \cite{GraSmi:98}  {\it shrinking neighbourhoods}. $j=j_1$ is called the {\it first essential critical time}.

In \eqref{shrinkingneighb} $\xi>0$ is arbitrary and $C_2$ is an appropriate constant.
The latter inequality follows from the assumption that

\noindent $\underline\chi(f(c):=\liminf_{n\to\infty} \frac1n\log |(f^n)'(f(c))|\ge 0$. In the sequel we shall assume that $\xi\ll \delta$.

\smallskip

Consider now $B_0=B(x,\kappa \exp -n\delta)$, for $0<\kappa\ll 1$ small enough that $B_0$ is deeply in $B(x,\prod_{s=1}^\infty(1-s^{-2}) \exp -n\delta)$ so that for the associated pull-backs $W_t$ of $B_0$, for $t=1,2, ... ,j$ we have $\diam W_t\le \e$. This is possible due to bounded distortion before the capture of $c$. 

\smallskip

Now notice that for $j=j_1\ge N$ for a constant $N=N(\delta,\xi)$ by  \eqref{shrinkingneighb} 
\begin{equation}\label{diam1}
\diam W_{j_1}\le \exp (-n\delta+j_1 2\xi) \le \kappa \exp (-n\delta+j_1 \delta/2).
\end{equation}

Hence,  denoting by $\tau=\tau(c)$ the multiplicity of $f$ at the critical point $c$,
\begin{equation}\label{diam2}
\diam W_{j_1+1}\le \kappa\exp \frac1{\tau(c)} (-n\delta+j_1\delta).
 \end{equation}

 If $j_1<N$ then we
 obtain \eqref{diam2} automatically if we replace $B_0$ by a disc centered at $x$ of diameter $a \exp (-n\delta)$ with $a$ small enough.

\smallskip

Denote $n_1=n-j_1-1$.
Apply the shrinking neighbourhood procedure starting from
$B(f^{n_1}(y),\exp -n_1\delta/\tau)$.
Let $0<j_2\le n_1$ be the first essential critical time, if it exists.
Denote the captured critical point by $\hat c$ (it can be different from the former $c$)

Denote $B_1=B(f^{n_1}(y),\kappa \exp -n_1\delta/\tau)$.
Notice that $B_1\supset W_{j_1+1}$.
Denote the consecutive  pull-backs of $B_1$ by $W_{1,j}$.  Repeating \eqref{shrinkingneighb} we obtain,
analogously to \eqref{diam1}, using an analogon of \eqref{one-step}, 
\begin{multline}
\diam W_{1,j_2}\le
C_1 (j_2+1)^{-8} |(f^{j_2})'(f(\hat c))|^{-1} \diam B_1 \le \\
\Const C_1 (j_2+1)^{-8} |(f^{j_2})'(f(\hat c))|^{-1} |f'(f^{j_2}(f(\hat c)))|^{-1} \diam f(B_1) \le \\
C_1 (j_2+1)^{-8} \exp (j_2+1)2\xi \diam f(B_1) \le \\
\kappa \exp (-n\delta + j_1\delta + (j_2+1)\delta/2),
\end{multline}


Here we have $j_2> N(\delta,\xi)$ automatically, provided $c=\hat c$ and $n_1$  large enough, since  otherwise $c$ is periodic attracting hence not in $J(f)$.

Denote $n_2=n_1-j_2-1$ and
continue, choosing $j_3, j_4, ...$, until an essential critical time $j_k$ does not exist; then the last
pull-back is just the pull-back of $B_{k-1}\ni f^{j_k}(y)$ for $f^{j_k}$, containing $y$, \ $j_k\ge 0$.
By the `telescoping' construction and isolating annuli of moduli $\log \Const\kappa$, all the pull-backs $W_s$ of $B_0, s=1,...,n$ have diameters not exceeding $\e$.

If there is more than one critical point in $J(f)$ then the proof should be modified in a standard way.
It relies on the observation that for $n$ large enough the pull-backs under consideration have small diameters so $j_s$ is small only if
$f^{j_s+1}(\hat c)=c$ which can happen consecutively only $\#(\Crit(f)\cap J(f))$ number of times, otherwise a critical point in $J(f)$ is periodic.

\end{proof}

From Proposition \ref{Prop} and Theorem \ref{spanning-complex} it follows

\begin{theorem}\label{Mcomplex2}
For every rational mapping $f:\widehat{\C}\to\widehat{\C}$ of degree at least 2 such that for  every critical point $c\in J(f)$ the lower Lyapunov exponent $\underline\chi(f(c))$ is non-negative, and for every $t>0$,  the equality
$
P_{\spanning}(t) = P_{\tree}(t)
$
holds.
\end{theorem}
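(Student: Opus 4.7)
The plan is straightforward: the theorem is a direct corollary of the two main tools developed immediately before it, so the proof amounts to verifying that the hypothesis activates Proposition \ref{Prop} and then chaining into Theorem \ref{spanning-complex}.

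First, I would apply Proposition \ref{Prop}. Its hypothesis is exactly that $\underline{\chi}(f(c))\ge 0$ for every critical point $c\in J(f)$, which is the assumption we have in hand. The conclusion is that $f$ is weakly backward Lyapunov stable (\textbf{wbls}) in the sense of Definition \ref{weak Lyapunov}.

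Second, with \textbf{wbls} established, I would invoke Theorem \ref{spanning-complex}. One direction, $P_{\spanning}(t)\ge P_{\tree}(t)$, holds for every rational map of degree at least two without any further hypothesis (the ALL part of the proof, via comparison with an isolated hyperbolic subset approximating $P_{\rm hyp}(t)$, together with Theorem \ref{tree-hyp-complex}). The reverse inequality $P_{\spanning}(t)\le P_{\tree}(t)$ is precisely the conclusion of the CONSTRUCTION part of that theorem under the \textbf{wbls} assumption, which we have just verified. Combining both inequalities yields the equality $P_{\spanning}(t)=P_{\tree}(t)$ for every $t>0$.

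There is no real obstacle here: the entire content is absorbed into Proposition \ref{Prop} (where the shrinking-neighbourhood / telescoping argument converts non-negativity of the lower Lyapunov exponents at the critical values into uniform smallness of pull-back diameters) and Theorem \ref{spanning-complex} (whose CONSTRUCTION part uses \textbf{wbls} together with Lemma \ref{unif} and Lemma \ref{uniformlyperfect} to exhibit a spanning set of essentially tree-pressure mass, while the ALL part uses the hyperbolic subsets from \cite{PrzRivSmi:04}). If anything had to be flagged as the delicate step, it would be the verification inside Proposition \ref{Prop} that $c\in J(f)$ or that $j_1$ is large enough so that the shrinking-neighbourhood estimate \eqref{shrinkingneighb} is controlled by $\underline{\chi}(f(c))\ge 0$; but that has already been handled in the proof of Proposition \ref{Prop} itself, so nothing further is required at this stage beyond citing both results.
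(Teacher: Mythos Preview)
Your proposal is correct and matches the paper's own argument exactly: the paper states that Theorem \ref{Mcomplex2} ``follows from Proposition \ref{Prop} and Theorem \ref{spanning-complex}'' with no further elaboration. Your write-up simply unpacks this citation chain, which is precisely the intended proof.
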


Now let us invoke the following part of \cite[Theorem 5.1]{LPS:14}

\begin{theorem}
For every rational mapping $f:\widehat{\C}\to\widehat{\C}$ of degree at least 2, such that there is exactly  one critical point
$c$ whose forward orbit has an accumulation point in $J(f)$ (i.e. $c\in J(f)$ or the forward trajectory of $c$ being attracted to a parabolic periodic orbit), we have $\underline\chi(f(c))\ge 0$.
\end{theorem}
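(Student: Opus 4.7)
I would argue by contradiction: assume $\underline\chi(f(c)) < -\chi < 0$ for some $\chi > 0$, and use this to build, along a suitable subsequence of times, pull-backs of a fixed small disc whose spherical diameters grow exponentially in $n$. The uniqueness of the accumulating critical point $c$ will then be incompatible with the existence of such pull-backs.

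\textit{Step 1 (Pliss selection).} A standard Pliss-lemma argument applied to $\log|f'|$ along the forward orbit of $f(c)$ extracts a sequence $n_k \to \infty$ with $|(f^{n_k})'(f(c))| \le \exp(-\chi n_k)$, and simultaneously provides the uniform lower bound $|(f^{j})'(f^{n_k-j+1}(c))| \ge \exp(-\xi j)$ for all $0 \le j \le n_k$, with $\xi>0$ arbitrarily small.

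\textit{Step 2 (Telescope of pull-backs).} Fix $r_0 > 0$ small, set $B_k := B(f^{n_k+1}(c), r_0)$, and let $V_k$ be the component of $f^{-n_k}(B_k)$ containing $f(c)$. I apply the shrinking-neighbourhood telescope from the proof of Proposition \ref{Prop}, pulling back slightly shrunk subdiscs of $B_k$ along $f(c), f^2(c), \ldots$ and recording the essential critical times $j_s$. By the uniqueness hypothesis, every critical point $c' \neq c$ satisfies $\overline{\{f^j(c')\}_{j\ge 1}}\cap J(f)=\emptyset$, so $\ddist(f^j(c'), J(f)) \ge \rho_0 > 0$ for some $\rho_0$. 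For $r_0$ sufficiently small every pull-back stays close to $J(f)$ and therefore cannot capture any $c'\neq c$; the only essential captures in the telescope are of $c$ itself.

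\textit{Step 3 (Exponential diameter).} Combining the Koebe-type estimates of \eqref{shrinkingneighb}--\eqref{diam2} with the Pliss bounds of Step 1, the total distortion loss accumulated across all telescope stages is controlled by $\exp(O(\xi) n_k)$, while the derivative hypothesis contributes a gain $\exp(\chi n_k)$ through $|(f^{n_k})'(f(c))|^{-1}$; the multiplicity $\tau$ of $f$ at $c$ enters via the radical in \eqref{diam2}. Choosing $\xi\ll \chi$ one obtains
\[
\diam V_k \;\ge\; \Const\cdot r_0 \cdot \exp\!\bigl((\chi/\tau - O(\xi))\, n_k\bigr),
\]
which tends to infinity with $k$ in the spherical metric.

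\textit{Step 4 (Geometric contradiction).} Since every capture in the telescope is of the single point $c$, the branched cover $f^{n_k}|_{V_k}\colon V_k\to B_k$ has degree bounded by a constant depending only on $\tau$ and on the number of essential critical times (the latter uniformly bounded via the isolating-annulus modulus count in the shrinking-neighbourhood construction). But as $k\to\infty$ the set $V_k$ fills an arbitrarily large portion of $\widehat{\C}$ while $B_k$ remains a fixed small disc; the area identity
\[
\deg\!\bigl(f^{n_k}|_{V_k}\bigr)\cdot \mathrm{area}(B_k) \;=\; \int_{V_k} |(f^{n_k})'|^2\, d\mathrm{area}
\]
together with Riemann--Hurwitz (applied to the bounded number of branch points) forces the degree to diverge, contradicting its uniform bound.

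\textit{Expected main obstacle.} Step 3 is the technical core and, I expect, the serious difficulty. Because $c$ itself is the recurrent critical point, it may be captured many times in a single telescope, each capture multiplying the distortion by a Koebe penalty and introducing a $\tau$-th-root scaling. The delicate bookkeeping is to show that the accumulated penalty stays at $\exp(O(\xi) n_k)$ instead of $\exp(O(1) n_k)$; this is achieved exactly by the Pliss lower bound $\exp(-\xi j)$ from Step 1, which bounds the derivative loss between consecutive captures by $\exp(\xi\cdot\text{(length of gap)})$. Making this estimate rigorous in the presence of arbitrarily many high-multiplicity captures of $c$, and propagating it cleanly through the telescope algebra, is where the real work lies.
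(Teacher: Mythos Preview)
The paper does not prove this statement; it is quoted as part of \cite[Theorem~5.1]{LPS:14} and used as a black box to obtain the subsequent corollary, so there is no argument in the present paper to compare yours against.

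On its own merits, Step~1 is internally inconsistent. With $S_m := \log|(f^m)'(f(c))|$, your first condition is $S_{n_k} \le -\chi n_k$, while your second condition evaluated at $j = n_k$ reads $|(f^{n_k})'(f^{1}(c))| \ge \exp(-\xi n_k)$, i.e.\ $S_{n_k} \ge -\xi n_k$. Together these force $\xi \ge \chi$, directly contradicting the choice $\xi \ll \chi$ on which Step~3 depends. No Pliss-type selection can produce a uniform lower bound on all tails $S_{n_k} - S_{n_k-j}$ simultaneously with a small total $S_{n_k}$: what Pliss actually yields along a subsequence is an \emph{upper} bound on tails (or on heads), the opposite sign. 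This is not cosmetic but the core obstruction. The telescope estimates \eqref{shrinkingneighb}--\eqref{diam2} in Proposition~\ref{Prop} bound pull-back diameters from above using $|(f^{j})'(f(c))|^{-1}$ precisely under the hypothesis $\underline\chi(f(c)) \ge 0$; turning them into lower bounds under $\underline\chi(f(c)) < 0$ would need exactly the tail lower bounds you cannot have. Step~2 is circular for the same reason: the assertion that for $r_0$ small every pull-back stays close to $J(f)$, hence avoids the other critical points $c' \in F(f)$, presupposes that pull-backs remain small, which is what Step~3 is meant to contradict. The argument in \cite{LPS:14} is organised differently and does not proceed by a direct reversal of the shrinking-neighbourhoods telescope.
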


This and Theorem \ref{Mcomplex2} yield

\begin{corollary}
Let $f:\widehat{\C}\to\widehat{\C}$ be a rational mapping of the Riemann sphere of degree at least 2, such that there is at most one critical point whose forward trajectory has an accumulation point in $J(f)$, then
$P_{\spanning}(t) = P_{\tree}(t).$
\end{corollary}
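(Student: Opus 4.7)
The plan is to deduce this corollary as an immediate combination of the cited theorem from \cite{LPS:14} with Theorem \ref{Mcomplex2}; all that is required is to verify the hypothesis of the latter, namely that $\underline\chi(f(c)) \ge 0$ for every critical point $c \in J(f)$.

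First I would record the elementary observation that any critical point $c \in J(f)$ has its entire forward orbit contained in $J(f)$ by forward invariance, and hence by compactness of the Julia set the forward orbit necessarily has accumulation points in $J(f)$. Consequently, the number of critical points of $f$ lying in $J(f)$ is bounded above by the number of critical points whose forward orbit accumulates in $J(f)$, which by the hypothesis of the corollary is at most one.

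The argument then splits according to whether this count is zero or one. If no critical point has a forward orbit accumulating in $J(f)$, then $\Crit(f)\cap J(f)=\emptyset$ and the hypothesis of Theorem \ref{Mcomplex2} is vacuously satisfied. If exactly one critical point $c_0$ has this property, then the cited theorem from \cite{LPS:14} gives $\underline\chi(f(c_0)) \ge 0$; moreover, by the observation above, the only critical point that can belong to $J(f)$ is $c_0$ itself. In either sub-case (either $c_0 \in J(f)$, in which case the hypothesis of Theorem \ref{Mcomplex2} is verified precisely for $c_0$, or $c_0 \notin J(f)$ and is attracted to a parabolic orbit, in which case $\Crit(f)\cap J(f)=\emptyset$ and the hypothesis is vacuous) Theorem \ref{Mcomplex2} applies and yields $P_{\spanning}(t) = P_{\tree}(t)$.

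There is no genuine obstacle; the corollary is essentially a bookkeeping exercise combining the two cited results. The only point requiring attention is the elementary containment used to pass from ``forward orbit accumulating in $J(f)$'' to ``lying in $J(f)$'' for critical points, which is what ensures that the hypothesis of the corollary controls precisely the critical points relevant to Theorem \ref{Mcomplex2}.
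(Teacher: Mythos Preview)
Your proposal is correct and follows exactly the paper's own argument: the paper simply writes ``This and Theorem \ref{Mcomplex2} yield'' the corollary, and you have spelled out the routine verification that the hypothesis of Theorem \ref{Mcomplex2} (namely $\underline\chi(f(c))\ge 0$ for every $c\in\Crit(f)\cap J(f)$) is implied by the cited \cite{LPS:14} result together with the elementary observation that critical points in $J(f)$ automatically have forward orbits accumulating in $J(f)$. Your case split into ``zero'' and ``exactly one'' is the natural way to handle the ``at most one'' hypothesis against the ``exactly one'' hypothesis of the cited theorem.
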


\

Without the assumption of weak backward stability, i.e. in the full generality, we can prove
only the following in place of Theorem \ref{spanning-complex}

\begin{theorem}\label{spanning-finite}
For every rational mapping $f:\widehat{\C}\to\widehat{\C}$ of degree at least 2 and for every  $t>0$
$$
P_{\spanning}(t)<\infty.
$$
\end{theorem}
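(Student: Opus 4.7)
The plan is to follow the construction of Part~I of Theorem~\ref{spanning-complex} and, in the absence of wbls, to augment the preimage set $f^{-n}(\hat X)$ with correction points obtained by covering each ``bad'' pull-back individually; the extra contribution is then controlled via the Koebe distortion theorem.

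Fix $\e>0$ and a small $\delta>0$, and set $r=\exp(-n\delta)$. Following Part~I of Theorem~\ref{spanning-complex} verbatim, I would produce a finite $\hat X\subset J(f)$ that is $r$-dense in $J(f)$, disjoint from the $2r$-neighbourhood of the critical-value set $\bigcup_{j=1}^n f^j(\Crit(f))$, and of cardinality $\#\hat X\le C\exp(2n\delta)$; the factor $2$ in the exclusion is a harmless thickening that costs only a constant factor in $\#\hat X$ and uses only the uniform perfectness of $J(f)$ (Lemma~\ref{uniformlyperfect}), not wbls. Writing $Y_0:=f^{-n}(\hat X)\cap J(f)$, Lemma~\ref{unif} (whose proof is also wbls-free) gives
\[
\sum_{y\in Y_0}|(f^n)'(y)|^{-t}\;\le\;\#\hat X\cdot\max_{\hat x\in\hat X}Q_n(\hat x,t)\;\le\;\exp\bigl(n(P_{\tree}(t)+2\delta+\xi)\bigr).
\]

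Without wbls the set $Y_0$ need not be $(n,\e)$-spanning, since some pull-back $V$ of $B(\hat x,r)$ under $f^n$ may have intermediate image of diameter $\ge\e$. For each such bad $V$ I would add to $Y_1$ a maximal $(\e L^{-n})$-separated subset of $V\cap J(f)$ (with $L=\sup|f'|$ in the spherical metric); by the Lipschitz bound $\rho_n\le L^n\rho$ the augmented set $Y_0\cup Y_1$ is genuinely $(n,\e)$-spanning, and each bad $V$ contributes at most $C'L^{2n}\e^{-2}$ points. The crucial observation is that the $2r$-avoidance built into $\hat X$ ensures $B(\hat x,2r)$ contains no critical value of $f^n$; hence the pull-back of $B(\hat x,2r)$ containing $V$ is mapped biholomorphically onto $B(\hat x,2r)$, and the Koebe distortion theorem applied to this pull-back yields $|(f^n)'(y)|\asymp|(f^n)'(y_V)|$ uniformly for $y\in V$, where $y_V\in V$ is the unique preimage of $\hat x$. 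Summing over all bad $V$,
\[
\sum_{y\in Y_1}|(f^n)'(y)|^{-t}\;\le\;C''L^{2n}\e^{-2}\sum_{y_V\in Y_0}|(f^n)'(y_V)|^{-t}\;\le\;C'''L^{2n}\e^{-2}\exp\bigl(n(P_{\tree}(t)+2\delta+\xi)\bigr).
\]

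Taking $\tfrac1n\log$ of the total derivative sum and letting $n\to\infty$ yields $P_{\spanning}(f,\phi_t,\e)\le 2\log L+P_{\tree}(t)+2\delta+\xi$, which is finite and independent of $\e$ (the factor $\e^{-2}$ is absorbed by $\tfrac1n\log$); passing $\delta,\xi\to 0$ and $\e\to 0$ gives $P_{\spanning}(t)\le 2\log L+P_{\tree}(t)<\infty$. The main obstacle I anticipate is securing Koebe distortion on the bad pull-backs, which is resolved by the $2r$-thickening of the critical-value exclusion; the rest is a routine enumeration of separated sets in each bad $V$.
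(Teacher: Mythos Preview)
Your plan has a genuine gap at the very first step. You claim to construct $\hat X\subset J(f)$ that is simultaneously $r$-dense in $J(f)$ and disjoint from the $2r$-neighbourhood of the critical-value set $\bigcup_{j=1}^n f^j(\Crit(f))$. This is impossible whenever $\Crit(f)\cap J(f)\ne\emptyset$ (the only interesting case): each $f^j(c)\in J(f)$ would need some $\hat x$ with $|\hat x-f^j(c)|<r$, yet you demand $|\hat x-f^j(c)|\ge 2r$. Consequently the balls $B(\hat x,r)$, $\hat x\in\hat X$, do \emph{not} cover $J(f)$; any $w$ with $f^n(w)$ in the $r$-neighbourhood of a critical value lies in no pull-back $V$ under consideration, and $Y_0\cup Y_1$ fails to $(n,\e)$-span it. Your Koebe step, which is the heart of your estimate, is precisely what is unavailable for such points: the relevant ball around $f^n(w)$ necessarily contains a critical value of $f^n$, so the pull-back need not be univalent and $|(f^n)'|^{-t}$ can be arbitrarily large on it. Uniform perfectness does not help here; it lets you \emph{find} safe points near the forbidden zone, but it cannot make a safe ball actually contain $f^n(w)$.

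The paper's argument takes a different route and sidesteps the obstruction entirely. Instead of a small $\delta$ it uses a \emph{fixed} exponent $\Delta>0$, supplied by \cite{DPU:96}, for which all pull-backs of $\exp(-n\Delta/2)$-balls are automatically $<\e$; this is a weak substitute for wbls that holds for every rational map and lets the original construction of Theorem~\ref{spanning-complex} produce a genuine $(n,\e)$-spanning set $Y=f^{-n}(\hat X)$ without any correction set $Y_1$. The price is that $\Delta$ is no longer at our disposal, so one cannot conclude via Lemma~\ref{unif}. Instead the paper bounds each summand $|(f^n)'(y)|^{-t}$ directly using the Denker--Przytycki--Urba\'nski estimate (Theorem~\ref{DPU}), which controls $\sum_{k}'\log\rho(f^k(y),\Crit(f))^{-1}$ along any orbit, together with the observation that the finitely many excluded indices satisfy $\rho(f^k(y),\Crit(f))\ge L^{-(n-k)}\exp(-n\Delta)$ by construction of $\hat X$. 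This yields a crude but finite upper bound $P_{\spanning}(t)\le h_{\rm top}(f)+tCM\Delta+tM\log L$, rather than a comparison with $P_{\tree}(t)$.
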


\begin{proof}
We proceed as in the proof of Theorem \ref{spanning-complex} Part I, with small modifications.
Notice that there exists $\Delta>0$ such that for an arbitrary $\e>0$  we have  for $n$ large enough for every $x\in J(f)$ and every pull-back $V$ of $B(x,\exp(-n\Delta/2))$ for $f^j, j=0,...,n$,\ $\diam V<\e$.
This fact follows  immediately from \cite{DPU:96}[Lemma 3.4].

Denote  $\sB:=\bigcup_{c\in\Crit(f)\cap J(f)}\bigcup_{j=1,...n} B(c,j)$,
where $B(c,j):=B(f^j(c), r)$ where $r:=\exp (-n\Delta))$. Then we find $X \subset J(f)\setminus \sB$ which is $r/2$-spanning and $\#X\le \Const \exp 2n\Delta$. Then we find an $(\e,n)$-spanning set $Y$ as in the proof of Theorem \ref{spanning-complex} Part I. Finally, in place of the inequality (\ref{key-spanning}), we just estimate $|(f^n)'(y)|^{-t}$ for $y\in Y$. For this aim we shall use the following, see \cite{DPU:96}[Lemma 2.3]
\phantom\qedhere\end{proof}
\begin{theorem}[Denker, Przytycki, Urba\'nski]\label{DPU} 
For every rational mapping $f:\widehat{\C}\to\widehat{\C}$ of degree at least 2
there exists $C_f>0$, which depends only on $f$
such that for every $z\in J(f)$
\begin{equation}\label{psibound}
\sum_{k=0}^{n-1}{\bf '} \ \varphi(k)\le C_f n, \ \ \ n=1,2,...
\end{equation}
where $\varphi(k)=-\log\rho(f^k(z),\Crit(f))$ and $\sum {\bf '}$ denotes the summation over all but at most $M=\#\ Crit$ indices.
\end{theorem}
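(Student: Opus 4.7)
The plan is to follow the classical strategy of Denker--Przytycki--Urba\'nski by reducing the estimate to a per-critical-point bound and then controlling visits to each critical point via a Koebe pull-back and packing argument that parallels the shrinking-neighbourhood machinery used in the proof of Proposition \ref{Prop}. Fix $r_0>0$ small enough that the disks $B(c,r_0)$, $c\in\Crit(f)$, are pairwise disjoint. Any index $k$ with $f^k(z)\notin\bigcup_c B(c,r_0)$ contributes at most $-\log r_0$ to the sum, so at most $(-\log r_0)n$ in total. It therefore suffices to prove, for each $c\in\Crit(f)$, the existence of a single index $k^*_c$ such that
\[
 \Sigma_c:=\sum_{k\in\mathcal{K}_c\setminus\{k^*_c\}}\bigl(-\log\rho(f^k(z),c)\bigr)\le \widetilde C_f\,n,\qquad \mathcal{K}_c:=\{0\le k<n:f^k(z)\in B(c,r_0)\}.
\]
Summing over $c$ then excludes at most $M=\#\Crit(f)$ indices in total and yields the stated linear bound.

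Fix $c\in\Crit(f)$, choose $k^*_c\in\mathcal{K}_c$ minimising $\rho(f^k(z),c)$, and stratify dyadically by setting $I_j:=\{k\in\mathcal{K}_c\setminus\{k^*_c\}:2^{-(j+1)}\le\rho(f^k(z),c)<2^{-j}\}$, $N_j:=|I_j|$. Then $\Sigma_c\le(\log 2)\sum_{j\ge 0}(j+1)N_j$, so the bound $\Sigma_c=O(n)$ reduces to the counting estimate
\[
 N_j\le A\,n\,\theta^{\,j}
\]
for constants $A>0$, $0<\theta<1$ depending only on $f$; substitution and geometric summation then produce $\Sigma_c\le\widetilde C_f\,n$ immediately. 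The role of the excluded index $k^*_c$ is precisely to absorb the single "deepest" return, whose stratum $I_{j^*_c}$ contains one point that need not be constrained by the packing.

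To establish the counting estimate I would argue by pull-back and packing. For each $k\in I_j$, pull back a fixed reference disk around $c$ by the branch of $f^{-k}$ along a Koebe shrinking neighbourhood that avoids the essential critical captures during the first $k$ iterations of $z$ (the bookkeeping being exactly that of the proof of Proposition \ref{Prop}); call the resulting univalent disk $V_k\ni z$. Koebe distortion combined with the local normal form $f(w)-f(c)\sim a(w-c)^{\tau(c)}$ at $c$ yields a quantitative lower bound on $\mathrm{area}(V_k)$ in terms of $2^{-j}$ and $|(f^k)'|^{-2}$ along the pull-back trunk. A packing argument based on the bounded overlap of the disks $V_k$ inside a fixed neighbourhood of $z$ in $\widehat\C$, together with the quasi-hyperbolic control of derivatives on $J(f)\setminus B(\Crit(f),r_0)$ supplied by Lemma \ref{quasihyperbolic}, then yields the geometric decay $N_j\le An\theta^{\,j}$.

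The main obstacle is the handling of pull-back orbits that cross neighbourhoods of several critical points $c'\ne c$ before reaching $c$: each such crossing costs a derivative factor that must be absorbed into the constants $A$ and $\theta$. This is exactly the role of the "first essential critical times" telescoping already displayed in the proof of Proposition \ref{Prop}, combined with the fact that the absence of attracting cycles in $J(f)$ forces successive essential critical captures along any orbit segment to be separated by at least a controlled time, so that their cumulative derivative loss can be absorbed into a constant $\theta<1$. Once the counting estimate is in place the conclusion follows.
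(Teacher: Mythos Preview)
The paper does not prove this theorem at all: it is quoted from \cite{DPU:96}[Lemma 2.3] and used as a black box in the continuation of the proof of Theorem~\ref{spanning-finite}. So there is no ``paper's own proof'' to compare your sketch with; what I can do is point out where your argument, as written, would not go through.

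The central gap is that you import the shrinking-neighbourhoods/telescoping machinery from the proof of Proposition~\ref{Prop} to control pull-backs across critical captures. That proof, however, relies essentially on the standing hypothesis $\underline\chi(f(c))\ge 0$ for every $c\in\Crit(f)\cap J(f)$; it is precisely this hypothesis that turns the bound in \eqref{shrinkingneighb} into something usable. Theorem~\ref{DPU} is stated for \emph{arbitrary} rational maps of degree $\ge 2$, with no Lyapunov-exponent assumption on the critical orbits, so you cannot invoke that machinery here. Relatedly, your assertion that ``the absence of attracting cycles in $J(f)$ forces successive essential critical captures along any orbit segment to be separated by at least a controlled time'' is not correct: different critical points can be captured in quick succession along a backward branch, and even for a single recurrent critical point the only universal lower bound on the gap between captures is $1$. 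Without a separation bound or the Lyapunov hypothesis, the cumulative derivative loss across captures cannot be absorbed into a fixed $\theta<1$.

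Two further points. First, Lemma~\ref{quasihyperbolic} is a statement about the quasi-hyperbolic distance in $\widehat{\C}\setminus W$ for a finite set $W$ (a chain-of-disks estimate used to compare $Q_n$ at different basepoints); it does not give ``control of derivatives on $J(f)\setminus B(\Crit(f),r_0)$'', and invoking it for that purpose is a non sequitur. Second, the pointwise geometric decay $N_j\le A\,n\,\theta^{j}$ that you aim for is strictly stronger than what Theorem~\ref{DPU} actually yields (which only gives $\sum_{j'\ge j}N_{j'}\le C\,n/j$), and your packing argument --- ``bounded overlap of the $V_k$'' --- is neither stated precisely nor justified; pull-backs along different return times to the same basepoint are typically nested rather than disjoint. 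The original argument in \cite{DPU:96} is considerably more elementary: it works critical point by critical point, compares consecutive returns using only the Lipschitz bound $|f'|\le L$ and the local power-law behaviour of $f$ near each $c$, and needs no Koebe/shrinking-neighbourhoods input. If you want to reconstruct a proof, that is the direction to take.
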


\

\begin{proof}[Continuation of Proof of Theorem \ref{spanning-finite}]
We can now write, using (\ref{psibound}),
$$
|(f^n)'(y)|^{-t}\le \exp (tC n) \exp (tC\Delta  M n) L^{tMn}.
$$
for a constant $C\ge 0$ depending on $C_f$ and the multiplicities of the critical points. The factor 
$\exp(tC\Delta Mn) L^{tMn}$ takes care of (at most) $M$ integers $k$ omitted in $\sum{\bf '}$. For these $k$ we use $\rho(f^k(y),\Crit(f))\ge L^{-(n-k)}\exp(-n\Delta)$, where $L=\sup|f'|$, true since otherwise
$\ddist(f^n(y),f^{n-k}(\Crit(f)))<$ $\exp -n\Delta$ which contradicts the definition of $r$ in $\sB$ above.

Hence, collecting our estimates,
$$
P_{\spanning}(t)\le h_{\rm{top}}(f)+tCM\Delta + tM\log L.
$$
 \end{proof}

\


\section{Geometric pressure via spanning sets. The real case}

We start from  a notion refining the definition of safe, see Definition\ref{safe}

\begin{definition}\label{fold safe} For $(f,K)\in {\sA}$ a point $z\in K$ is called {\it safe from outer folds} if
for every $\eta>0$ and all $n$ large enough,
for every pull-back $W_n$ of $W=B(z, \exp (-\eta n))$ for $f^n$,
intersecting $K$, containing a turning critical point for $f^n$,
 there is a point $z_n \in \partial W_n$
 such  that  $f^j(z_n)\in \hat I$ for all $j=0,1,...,n$.
\end{definition}

\begin{theorem}\label{spanning-real}
For every $(f,K)\in {\sA}^{\BD}_+$ (or ${\sA}^3_+$) without parabolic periodic orbits, weakly isolated, for every  $t>0$ and
every safe $z\in K$ it holds
$P_{\spanning}(t) \ge P_{\tree}(z, t)$.

If every periodic $z\in\partial \hat I$ is safe from outer folds, then the equality of the pressures holds. In particular it holds, provided $K=\hat I=I$, namely it is a single interval

\end{theorem}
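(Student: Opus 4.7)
The plan is to prove the two inequalities separately, closely mirroring the complex proof of Theorem \ref{spanning-complex}, while handling two real-specific obstacles: pull-backs may extend beyond $\hat I$, and pull-backs containing turning critical points fold, so $f^n$ is not injective on them. The lower bound $P_{\spanning}(t)\ge P_{\tree}(z,t)$ is the direct analogue of Part II of that argument: by Theorem \ref{Independence-safe} we have $P_{\tree}(z,t)=P_{\rm hyp}(t)$ for every safe $z\in K$, so it suffices to verify $P_{\spanning}(t)\ge P_{\rm hyp}(t)$. For $\xi>0$ choose a compact $f$-invariant isolated uniformly hyperbolic $X\subset K$ with $P(f|_X,\phi_t|_X)\ge P_{\rm hyp}(t)-\xi$, then for small $\epsilon$ and large $n$ take $(n,2\epsilon)$-separated $X_n\subset X$ whose $\phi_t$-weight is close to $\exp nP_{\rm sep}(f|_X,\phi_t|_X)$. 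Any $(n,\epsilon)$-spanning $Y_n$ yields an injection $X_n\hookrightarrow Y_n$ by nearest-neighbour selection, and uniform hyperbolicity on $X$ gives bounded distortion between $|(f^n)'(y)|$ and $|(f^n)'(y')|$ on matched pairs, forcing the inequality in the limit.

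For the opposite inequality, adapt Part I of the complex proof. Fix $\epsilon,\xi>0$ and small $\delta>0$. Backward Lyapunov stability, which holds by \cite[Lemma 2.10]{PrzRiv:13} since there are no parabolic orbits, ensures every pull-back of $B(x,r)$ for $r=\exp(-n\delta)$ under $f^j$, $0\le j\le n$, has diameter less than $\epsilon$ for large $n$. Cover $K$ by at most $\Const\exp(n\delta)$ balls $B(x_i,r/2)$ and perturb each $x_i$ within $r/2$ to obtain $\hat x_i\in K$ satisfying \eqref{safe1} at scale $n$ and simultaneously safe from outer folds. This last choice is where the hypothesis enters: the only obstruction to finding a safe-from-outer-folds point near $x_i$ would be a full periodic combinatorics at $\partial\hat I$, which is excluded by assumption. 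Define
\[
Y_n:=\Bigl(\bigcup_i f^{-n}(\hat x_i)\cap K\Bigr)\cup\{z_W\},
\]
the second set ranging over pull-backs $W$ of $B(\hat x_i,r)$ under $f^n$ that meet $K$ and contain a turning critical point of $f^n$, with $z_W\in\partial W$ the boundary preimage supplied by safety from outer folds (belonging to $K$ by weak isolation and the maximality of $K$, as in Proposition \ref{prop:K}).

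To see that $Y_n$ is $(n,\epsilon)$-spanning, for each $x\in K$ pick $\hat x_i$ with $f^n(x)\in B(\hat x_i,r)$ and let $W$ be the pull-back through $x$; then $\diam W<\epsilon$ and $W$ contains either the unique preimage of $\hat x_i$ (if $f^n|_W$ is monotone) or the boundary point $z_W$. For the total weight, apply the real analogue of Lemma \ref{unif}---provable by the same quasi-hyperbolic chaining in one dimension via Koebe-type bounded distortion, and already implicit in the proof of Theorem \ref{Independence-safe}---to obtain $\sum_{y\in f^{-n}(\hat x_i)\cap K}|(f^n)'(y)|^{-t}\le\exp n(P_{\tree}(z,t)+\xi)$ uniformly in $i$. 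The boundary points $z_W$ number only polynomially in $n$, and each contribution $|(f^n)'(z_W)|^{-t}$ is bounded by the same exponential rate via the telescoping shrinking-neighbourhood estimate from the proof of Theorem \ref{Independence-safe}, comparing the derivative at $z_W$ with the derivative along an interior preimage of $\hat x_i$ up to a subexponential factor. Summing over $i$ and taking $\limsup\frac{1}{n}\log$, then $\xi,\delta\to 0$, yields $P_{\spanning}(t)\le P_{\tree}(z,t)$.

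The main obstacle is the folded pull-backs: without safety from outer folds for the $\hat x_i$, a pull-back $W$ containing a turning critical point may admit no point of $K$ that can be tracked back to any $\hat x_i$, and the spanning property fails. The example $f(x)=ax(1-x)$ with $a<4$ close to $4$ noted after Proposition \ref{prop:K} shows that periodic endpoints of $\partial\hat I$ can indeed violate safety from outer folds. When $K=\hat I=I$ is a single interval, however, the two endpoints of $I$ are sent into $I$ by forward invariance, hence are automatically safe from outer folds, yielding the unconditional equality claimed at the end of the theorem.
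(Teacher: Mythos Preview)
Your Part II (the inequality $P_{\spanning}(t)\ge P_{\tree}(z,t)$) is correct and coincides with the paper's argument.

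Part I, however, has genuine gaps, and the paper's route is structurally different. Three problems:

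\smallskip
\textbf{(a) The choice of $\hat x_i$ safe from outer folds is not justified.} The hypothesis only asserts that each \emph{periodic} $z\in\partial\hat I$ is safe from outer folds. You assert that this implies one can find, inside every ball $B(x_i,r/2)$, a point of $K$ that is safe from outer folds; the sentence ``the only obstruction \ldots\ would be a full periodic combinatorics at $\partial\hat I$'' is not a proof. Safety from outer folds is a quantified asymptotic condition (for all $\eta>0$ and all large $n$) on a specific point, and there is no mechanism in the paper or in your sketch that propagates it from the periodic boundary points to a dense set.

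\smallskip
\textbf{(b) The claimed real analogue of Lemma \ref{unif} does not transfer.} Lemma \ref{unif} rests on Lemma \ref{quasihyperbolic}, a two-dimensional chaining statement: one links $z_1$ to $z_2$ by $O(\sqrt{m\log(1/r)})$ discs whose doubles avoid $W$. On the line there is no going around the points of $W=\bigcup_j f^j(\Crit(f))$; any chain from $z_1$ to $z_2$ must cross them, so the bounded-distortion comparison of $Q_n(\hat x_i,t)$ across many centers $\hat x_i$ fails as stated. The paper avoids this entirely by working with a \emph{single} fixed safe point $z$ and its preimage set $A=\bigcup_{j\le N}f^{-j}(z)\cap K$.

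\smallskip
\textbf{(c) The ``polynomially many $z_W$'' claim is false.} For each of your $\exp(n\delta)$ centers there can be exponentially many pull-backs for $f^n$, and a positive proportion may contain turning critical points; so $\{z_W\}$ is in general exponentially large, and its contribution must be controlled by the same tree-pressure estimate, not dismissed.

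\smallskip
The paper's construction is different in kind. It takes the set $A(z,\delta')$ above, which is $\delta'$-dense in $K$, and shows that $\bigcup_{0\le j\le \kappa n+C(\e)} f^{-n-j}(z)\cap K$ is itself $(n,\e)$-spanning. The point is a forward-orbit dichotomy for each $w\in K$: either some $f^{n+m}(w)$ (with $m$ bounded) lies between two points $z_0,z_0'\in A$ at distance $\le\delta$, in which case a boundary point of the pull-back of $[z_0,z_0']$ lies in $K$ by the weak-isolation argument of Theorem \ref{Independence-safe}; or $f^{n+m}(w)$ is forced into a small neighbourhood of a periodic $p\in\partial\hat I$, and \emph{only here} does the safe-from-outer-folds hypothesis on that specific $p$ enter, producing a boundary point of the pull-back of $B(p,\exp(-\eta n))$ whose forward orbit stays in $\hat I$. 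Because everything is referred back to the single safe $z$, no uniformity lemma of the type \ref{unif} is needed; the weight of the spanning set is bounded directly by $\sum_j Q_{n+j}(z,t)L^{jt}$.
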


\begin{proof}

\smallskip

I. The CONSTRUCTION inequality.

We mostly repeat parts of the proof of Theorem \ref{Independence-safe}

Fix an arbitrary safe $z\in K$ and $\delta$ is chosen to $\e$ as in the
Definition of backward Lyapunov stability.

For an arbitrary $0<\delta'\le \delta$ let
$N=N(\delta')$ be such that

\noindent $A=A(z,\delta'):=\bigcup_{j=0,...,N}f^{-j}(z)\cap K$ is $\delta'$ dense in $K$.


We shall prove that the set $f^{-n}(A)\cap K$ itself happens to be
an $(\e,n)$-spanning set (roughly, for $w$ with $f^n(w)$ as in  case (i) below) under an assumption as in  Theorem \ref{spanning-real}.
Then immediately $P_{\spanning}(t) \le P_{\tree}(z, t)$.

\smallskip

Indeed, if $w'=f^{n}(w)\in  W=[z_0,z_0']$ with its endpoints belonging to $A$ whose distance is at most $\delta$ then for its pull-back $W_{n}=[z_{n},z'_{n}]$ containing $w$ we have for all $j=0,...,n$,
$|f^j(z_{n})-f^j(w)|<\e$ (and the same for $z'_{n}$). The proof that $z_{n}$ or $z'_{n}$ is in $K$ is the same as in Proof of Theorem \ref{Independence-safe} and uses the weak isolation assumption.

\smallskip

A trouble is with $w$ such that $w'=f^{n}(w)$ is not in any $W$ as above.
Then, as in   Proof of Theorem \ref{Independence-safe} there is a large gap (a component in $\R\setminus K$) of length at least $\delta/4$ within the distance at most $\delta'$ of $w'$.

Then we have two cases.

(i) For some $m$ bounded by a constant depending only on $(f,K)$ and $\delta$ the point $f^m(w')$
belongs to some $W$ with endpoints $z_0,z'_0 \in A(z,\delta')$ for $\delta'$ satisfying (\ref{delta'}). Then $\rho_{n}(w,z_{n+m}) \le \rho_{n+m}(w,z_{n+m})< \e$ for an appropriate $z_{n+m}$ in 
 the boundary of the pull-back of $W$ for $f^{n+m}$ containing $w$.

\smallskip

(ii) For some $n+m$ the point $w''=f^{n+m}(w)$ is close to a
periodic point $p$ in the boundary of a large gap $G$.

Notice that in fact $p\in\partial\hat I$, see \cite{PrzRiv:13}[Lemma 2.9, Case 2]. Indeed, if $p$ and all other points of its periodic orbit belong to the interior of $\hat I$, then also
$\bigcup f^j(G)\subset\interior\hat I$. Otherwise, if $j_0$ is the least integer such that $f^{j_0}(G)$ intersects $\partial \hat I$ at a point $y$, then $y_{j_0}=(f^{j_0}|_G)^{-1}(y)\in G$ and it belongs to $K$ since $y\in K$ and all $f^j(y_{j_0})$ belong to $\hat I$ hence to $K$ by the maximality of $K$.
 This contradicts $G\cap K=\emptyset$. If all $f^j(G)$ are in $\hat I$, then by the maximality $G\subset K$, a contradiction.

\smallskip

 Then, as at the end of Proof of Theorem \ref{Independence-safe}, consider $\hat z\in f^{-[\kappa n]}(z)$
 belonging to  $B=B(p, \exp -\eta n) \cap K$, for $\eta<\kappa\chi(p)$ where $\chi(p)$ is Lyapunov exponent at $p$. In particular $|p-\hat z|<|p-\exp \eta n|$. 
 Denote $r=|p-\hat z|$ and $B':=B(p,r)\subset B$.
 
 If $w''\notin B'$, then for some $k \le \kappa\chi n $ the point $v=f^k(w'')$ is far from the periodic orbit of $p$ but $f^k$ is still invertible on $B(p,|p-w''|)$. In particular there exist $z_0, z'_0\in A$ such that
 $|z_0-z'_0|<\delta$ and $v\in [z_0,z'_0]$. Hence $w''\in [z_k,z'_k]$, the pull-back. 
 Hence, as before, $w\in [z_{n+m+k},z'_{n+m+k}]$ where one of the ends say $z_{n+m+k}$ is in $K$ and 
 $\rho_n(w,z_{n+m+k})<\e$. 
 
 If $w''\in B'$, then 
 by the assumption that $p$ is safe from the outer fold for the constant $\eta$ for $n$ large enough, for $[z(w),z'(w)]$ being the pull-back of $B$ for $f^{n+m}$ containing $w$,  all $f^j(z(w)), j=0,...,n+m$ belong to $\hat I$ (or the same for $z'(w)$). In particular $u:=f^{n+m}(z(w))$ is the point of $\partial B$ in $\hat I$.
  
  By our definitions, $\hat z$ is between $w''$ and $u$. Since $w\in K$, $f^j(w)\in \hat I$ for all $j\ge 0$. Hence there exists $\hat z_{n+m}\in [w,z(w)]\cap f^{-n-m}(\hat z)$.
   such that $f^j(\hat z_{n+m})\in \hat I$ as belonging to $f^j([w,z(w)]$ being intervals shorter than $\e$ with ends in $\hat I$.  These ends may be of the form $f^j(w), f^j(z(w))$ or $f^i(c)$ for a turning critical point $c\in K$ hence in $K\subset \hat I$.
  
  Hence $\hat z_{n+m}\in K$ and 
 $\rho_n(w,\hat z_{n+m})\le\e$.


 \smallskip

 So, given $\e>0$ and safe $z\in K$, for all $\kappa>0$, for all $n$ large enough,
 the set
 $$
 {\rm SP}(z,n):= \bigcup_{0\le j \le \kappa n + \Const(\e)} 
 f^{-n-j}(\{z\})
 $$ 
 is
 $(n,\e)$-spanning. 
 $\Const(\e)$ depends on $N$ and $m$ above which depend on $\e$.
 
 Next use $|(f^j)'|\le L^j$. 
 We have, denoting $\hat n=\kappa n + \Const(\e)$, for $\xi>0$,  
 
 $$
 \sum_{x\in {\rm SP}(z,n)} |(f^n)'(x)|^{-t} \le \sum_{0\le j \le \hat n} Q_{n+j}(z,t)L^j
 $$
 $$
 \le L^{\hat n}\sum_{0\le j \le \hat n} \exp \bigl((n+j)(P_{\tree}(z)+\xi)\bigr).
 $$
 
 For $n\to\infty$ and $\kappa\to 0$ this holds for $\xi$ arbitrarily small and finally 
 $P_{\spanning}(t) \le P_{\tree}(z, t)$.


\smallskip

Notice that unlike in Proof of Theorem \ref{Independence-safe} we have not needed here to compare the derivatives $|(f^n)'(w)|$ and the shadowing $|(f^n)'(z_n)|$. In particular we consider all $w$, rather than having $f^n(w)$ safe.

\smallskip

Notice finally that if $K=\hat I=I$ is a single interval, then every $z\in \hat I$ is safe from outer folds. Otherwise both ends $z_n,z_n'$ of $W_n$ are outside $\hat I$, since if, say, $z_n\in \hat I$ then
 all $f^j(z_n)\in \hat I$ by the forward invariance of $K=\hat I$ here. So $z_n$ and $z'_n$ are on the  different sides of $I$.
This is not possible since $W_n$ is short by backward Lyapunov stability of $f$.

\smallskip

II. The ALL inequality.  The proof is the same as in the complex case, via $P_{\spanning}(t)\ge P_{\rm{hyp}}(t)$.

\end{proof}

\

\begin{example} We show that the assumption on the safety from outer folds is really needed in Theorem \ref{spanning-real} above.

\smallskip

$\bullet$ \ Consider quadratic polynomials $f_a(x)=ax(1-x)$ for $0<a<4$ large enough that the entropy of $f_a$ is positive. For each $a$ let $p_a$ denote the unique fixed point in the open interval $(0,1)$. It is repelling; let us make a small perturbation of $f_a$ close to $p_a$ so that $p_a$ becomes attracting and a repelling orbit $Q_a$ of period 2, being the boundary of $B_0(p)\subset (1/2,1)$ which is the immediate basin of attraction to $p_a$, is created.

One can do it in such a way that Schwarzian derivative $Sg$ of 
the new map $g=g_a$ is negative except in $B_0(p)$.  Write $Q_a=\{q_a,q'_a\}$ with $q_a<q'_a$. Omit the subscript $a$. Define 
$$
g (x)=\begin{cases} f(x)-b \times (x-q)^3, & {\rm{if}} \ \   q<x<p-\e          ;\\ 
          f(x)+ b \times (q'-x)^3,   & {\rm{if}}  \ \  p+\e<x<q'         ;\\
p   & {\rm{if}}  \ \   p-\e \le x \le p+\e                                   ;\\
f(x), & {\rm{otherwise}} .\end{cases}
$$
One can choose $\e>0$ arbitrarily small and $b>0$ so that the above function is continuous. Then $b$ is also small hence by $Sf<0$ Schwarzian of $g$ stays negative except in $[p-\e,p+\e]$.

\smallskip

$\bullet$ \ Let
$\underbar I_a=(I_n)_{n=1,2,...,N}$ denote the kneading sequence for $g_a$, that is the sequence of letters $L,R,C$ depending whether $c_n=g_a^n(1/2)$ lies to the left of the critical point $1/2$, to the right of $1/2$, or at $1/2$. We put $N$ the least integer $n$ for which $I_n=1/2$. If no such integer exist we put $N=\infty$. See \cite{CE:80} for these definitions.

Let
\begin{equation}\label{itinerary}
\underbar I = RLR^{n_1}LR^{n_2}LR^{n_3}L...,
\end{equation}
where $N=\infty$, all $n_j$ are finite positive, even, their sequence is increasing and $n_j\to\infty$ exponentially fast as $j\to\infty$.

$ \underbar I$ is a maximal sequence for every sequence $(n_j)$ satisfying above conditions, hence there exists $a$ such that $g=g_a$ has this kneading sequence, see \cite{CE:80}[Theorem III.1.1].

For  $\underbar I$ as above for $c_n$ left of $1/2$ we have $c_{n+1}$ right of $1/2$ and close to $q_a$, left of it (remember $Q_a=\{q_a,q'_a\}$ with $q_a<q'_a$). Next the trajectory $c_{n+2}, c_{n+3}, ...$ follows $Q_a$ outside of $[q_a,q'_a]$, in the interval $(1/2,1)$ until $c_{n+n_k+1}$ occurs to the left of $1/2$ moreover to the left and close to the point symmetric to $q_a$ with respect to $1/2$.

\smallskip

$\bullet$ \ Now consider $\hat I=[c_2,q_a] \cup [q'_a,c_1]$, $g$ restricted to a neighbourhood $\bf U$ of $\hat I$ and $K$ the maximal forward invariant subset of $\hat I$. Clearly $1/2\in K$ since otherwise $g^n(1/2)\to p$
so $\underbar I$
would consist solely of $R$'s for $n$ large enough. $K=\hat I\setminus B(p_a)$, where $B(p_a)$ is the basin of attraction by $g$ to $p_a$. Due to $Sg<0$ on  a neighbourhood ${\bf U}$ of $K$ we obtain 
 $(g,K,\hat I, {\bf U})\in \sA^{\rm{BD}}$, provided we prove 

\smallskip

$\bullet$ \ {\bf Claim}: $g$ is topologically transitive on $K$. 

\smallskip

  Let $a'$ be so that the kneading sequence for $f=f_{a'}$ is the same as for $g_a$, that is 
  $\underbar I$.
Due to the lack of attracting and parabolic periodic orbits for $f_{a'}$ (otherwise $\underbar I$ would be eventually periodic), there is
a monotone increasing continuous semiconjugacy $h:[c_2,c_1]\to [c_2,c_1]$ such that $f\circ h= h\circ g$. $h$ is defined first in a standard way on $\sT (g):=\bigcup_{n\ge 0} g^{-n}(1/2)$  to the corresponding
$\sT (f):=\bigcup_{n\ge 0} f^{-n}(1/2)$, 
 increasing since the orders in the interval $[c_2,c_1]$ of points in these sets are (combinatorially) the same, due to the same kneading sequences.

 The mapping $h$ can be continuously extended to the closures, and notice that $\cl\sT (f)=[c_2,c_1]$ due to the absence of wandering intervals for $f$.
 
 This $h$ collapses  $B_0(p)$ and its $g^n$-preimages to points, provided we extend $h$ to these gaps by constant functions. In other words $h$ identifies the pairs of points being ends of  gaps $B(p)$ being components in the basin of $p$. There are no other gaps in $[c_2,c_1]\setminus \cl\sT (g)$  since there are no wandering intervals (see \cite{dMvS}) and no attracting or parabolic periodic orbits other than $p$. This in turn holds  since the Schwarzian
$Sg$ is negative outside $B(p)$ so the basin of such an orbit would contain a critical point that is $1/2$ which is not possible since $\underbar I$ is not eventually periodic. Therefore $h$ is injective on $K$ except
the abovementioned pairs of points.  

Notice that our $\underbar I$ is not a *-product, see \cite{CE:80}[Section II.2] for the definition.
Hence there is no interval $T\subset I_f=[f^2(1/2),f(1/2)]$ such that $f^k(T)\subset T$ for some $k> 1$ containing $1/2$ with $f^k$ unimodal on it (i.e. with one turning point), i.e. there is no renormalization interval. (In other words $f$ is not renormalizable). This follows from \cite{CE:80}[Corollary II.7.14].

Consider now any interval $T\subset I_f$ and $V=\bigcup_{j\ge 0}f^j(T)$. By definition $V$ is forward invariant. Let $W$ be a connected component of $V$. Then there are integers
$0\le k_1<k_2$ such that $f^{k_1}(W)\cap f^{k_2}(W) \not=\emptyset$ since $W$ is non-wandering, see \cite{dMvS}[Chapter IV, Theorem A] for the non-existence of wandering intervals. Hence, for $k=k_2-k_1$,
and $W'=f^{k_1}(W)$,  $f^k(W')\subset W'$. We consider $k$ the smallest such integer. We can assume that $1/2\in W'$ (or some $f^j(W')$), since otherwise $W$ would be attracted to a periodic orbit and  we have assumed such orbits do not exist. No $f^\ell(W'), 0<\ell<k$ contains $1/2$ by its disjointness from $W'$. 
So $f^k$ is unimodal on $W'$. So $k = 1$, since otherwise $f$ would be renormalizable. So $f(1/2)$ and $f^2(1/2)$, the end points of $I_f$, belong to $W'$.
Hence $V=I_f$, hence $f$ is topologically transitive on $I_f$.

This due to our semiconjugacy and the fact that $K$ has no isolated points, implies the topological transitivity of $g$ on $K$. The Claim is proved.

\smallskip

The property we proved in particular, $(\forall\ {\rm{open}} \ W) \ (\exists k),\ f^k(W)={\rm{domain}}(f)$ is called topological exactness or leo -- "locally eventually onto". This is stronger than topological transitivity. See \cite{PrzRiv:13}[Lemma A7] for a discussion of a general case. Since the topological entropy $h_{\rm{top}}(g|K)>0$ we can write $(g,K,\hat I, {\bf U})\in \sA^{\rm{BD}}_+$

\smallskip

$\bullet$ \ Notice that $K$ is weakly isolated for $g$ on $\bf U$, see Definition \ref{weak isolation}. This is so because if a periodic trajectory $P$ in $\bf U$ has a point $z\notin K$ then $z$ belongs to the basin of attraction to $p$, i.e. $g^n(z)\to p$. In other words the trajectory $g|_{\bf U}^n(z)$ leaves $\bf U$.
Hence $P\subset K$. 
Note that above argument proves the  weak isolation property in general situations, namely if $K$ is Julia set in the sense of
\cite{dMvS}[Chapter IV, Lemma] i.e. the domain being an interval with the basins of attracting or parabolic periodic orbits removed (provided there is a finite number of them).

\smallskip

$\bullet$ \ Notice that $q_a$ is not safe from outer folds, see Definition \ref{fold safe}. Indeed.
Denote $2+\sum_{j=1,...k} n_j +k +1$ by $m_k$. The summands $n_j$ correspond to the blocks of $R$'s,
the first summand 2 corresponds to the starting $RL$ and the final 1 to the first  $R$ in the $k+1$'th block of $R$'s. We obtain $|c_{m_k}-q_a|\le \Const(a) \exp -n_{k+1}\chi(q_a)$, where $\chi(q_a)= \frac12 \log |(g^2)'(q_a)|$. Consider the pull-back $W_{m_k}$ of $W=B(q_a, \exp (-\eta m_k))$ for $g^{m_k}$ containing $\frac12$.

\noindent \underline{The critical point $1/2$ is not recurrent} since $c_{m_k-1}$ corresponding to $L$ approach to the point symmetric to $q_a$ since $n_j$ grow, so they are in the distance from $c_2$ bounded away from 0. Hence the only points we need to care about, $c_{m_k-3}$ are in the distance from $1/2$ also bounded away from 0. 

Hence for all $n=1,2,...m_k-1$ the map $g$ is injective on $g^n(W_{m_k})$ but $g$ has a turning critical point $1/2$ in $W_{m_k}$. Using the assumption that all $n_j$ are even we conclude that each $g^{m_k}$ has a minimum at $1/2$, hence if $n_{k+1}\chi(q_a) \gg m_k\eta$ the boundary points of $W_{m_k}$ are mapped by $g^{m_k}$ into the gap (basin $B_0(p)$). In other words $\partial W_{m_k}\subset B_{m_k}\cup B_{m_k}'$ the latter being the union of a symmetric pair of pull-backs of $B_0(p)$ for $g^{m_k}$ on both sides of $1/2$.  

\smallskip

$\bullet$ \ Imposing sufficient growth of $n_j$, e.g.
\begin{equation}\label{fast}
n_{j+1}/n_j\to\infty,
\end{equation}
 we get a counterexample to $P_{\spanning}(t) \le P_{\tree}(z, t)$.
Indeed,
consider an arbitrary $x\in K$ which $(n,\e)$-close to $1/2$ for $n=m_k$.

Due to the non-recurrence of $1/2$, see above,  $g$ is expanding on the limit set $\omega(1/2)$, see Definition \ref{hyperbolic} and e.g. Ma\~{n}\'e's theorem: \cite{dMvS}[Section III.5 Corollary 1]. 
Denote the expanding constant by $\lambda$, compare Definition \ref{hyperbolic}. 

Hence for $\e$ small enough and an integer $N$ such that all $g^j(1/2), j\ge N$
are close to $\omega(1/2)$, if $|g^j(1/2)-g^j(x)|\le \e$ for all $j:N\le j\le n$, then for all $N\le j \le n$, $g^j(x)\in g^j(W_n)$, where $W_n$ is the pull-back of $W$ as above, but for
$W=B(q_a,\e)$ (unlike above). Then this holds also automatically also for $0\le j < N$, maybe on the cost of taking a smaller $\e$.

Suppose $n_{j+1}\gg m_j$. Then $\diam g^n(W_n)\cap \hat I\le \exp -C n$ for $C$ large.  Hence for all $0<j\le n$, $|g^j(x)-c_j|\le \exp -Cn$ and $|x-1/2|\le \exp -Cn/2$. Hence $|(f^n)'(x)|\le 
 \lambda^{2n}\exp -Cn/2$.
Hence, for every $(n,\e)$-spanning set $Y\subset K$, $\sum_{y\in Y} |(g^n)'(y)|^{-t}\ge \lambda^{-t2n}\exp Cn/2$. The assumption \eqref{fast} allows to have $C$ arbitrarily large.

We conclude that  $P_{\spanning}(t) =\infty$.

\end{example}

\

\

\begin{figure}[ht]
\def\svgwidth{12cm}
\centering
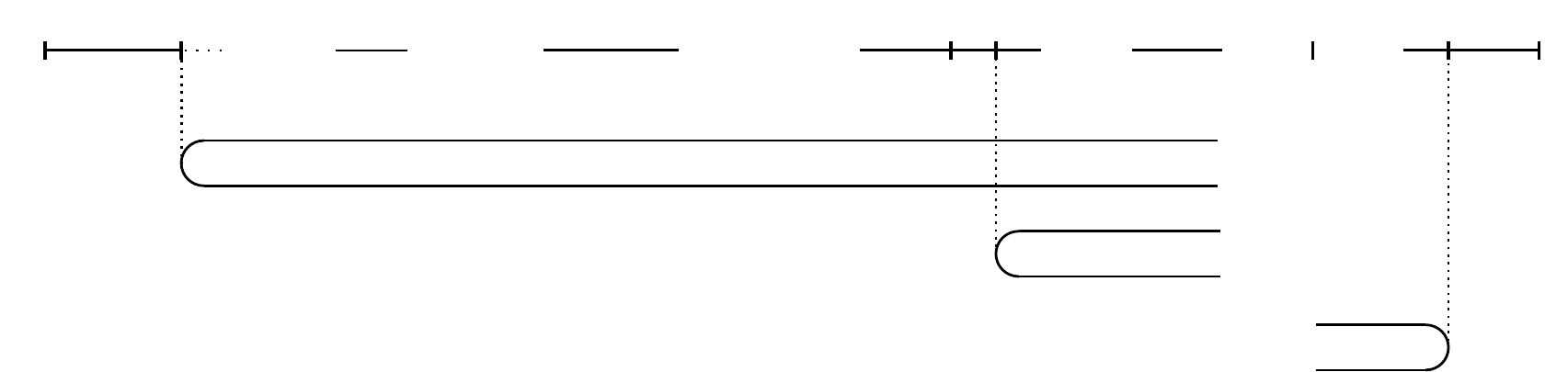
\caption{Pull-backs of $B_0(p)$.}
\end{figure}


\

\begin{remark}

\smallskip

1. In the example above $K$ is not uniformly perfect (considered in the plane), unlike in the complex case where the uniformly perfect property of Julia set allowed us to prove Theorem 4.

\smallskip

2. In this example the so-called Bowen's periodic specification property does not hold. This property is defined
for any continuous map $f:X\to X$ of a compact $X$ as follows: For every $\e>0$ there exists an integer $N$ such that for every $x\in X$ and every integer $n\ge 0$ there exists $y\in X$ of period
$k:n\le k\le n+N$ such that for every $0\le j\le n$, $\ddist (f^j(x),f^j(y))\le \e$.

Even a weaker periodic specification does not hold, where $N=N(\e)$ is replaced by $N(n,\e)$ for $\e$ small enough (see the survey \cite{KLO:15}). Namely for every function $N(n,\e)$ there exists $a$ such that for $g_a$ with an appropriate kneading sequence $\underline I$ the specification with $N(\e,n)$ does not hold. Consider blocks of the $g$-trajectories $1/2, c_1,c_2,... c_{m_j}$ with $n_k$ growing fast enough. Then for every $y$ being $(m_j,\e)$-close to $1/2$, $y$ is in fact $\xi$-close to $1/2$ for $\xi>0$  arbitrarily small, depending on $n_{j+1}$. Then the period of $y$ must be long since otherwise $y$ would be an attracting periodic point.

3. One can have an additional insight in the topological dynamics of $g_a$ or $f=f_{a'}$
if one uses the existence of
a semiconjugacy of $f$ to a tent map $\tau$ (of slopes $\pm h_{\rm{top}}(f)$, see \cite{MT:88}[Theorem 7.4], which must be a conjugacy since $f$ has no renormalization or wandering interval 
\cite{dMvS}[Chapter IV, Theorem A].

\end{remark}

\bibliographystyle{amsplain}

\end{document}